\definecolor{darkblue}{rgb}{0, 0, .4}
\title{Characterising inflations of monotone grid classes of permutations}
\author{
Michael Albert\thanks{Email addresses: \texttt{malbert@cs.otago.ac.nz, aistis.atminas@gmail.com, rbrignall@gmail.com}.}\\[10pt]
\small Department of Computer Science\\
\small University of Otago\\
\small Dunedin\\
\small New Zealand
\and
Aistis Atminas\footnotemark[1]\\[10pt]
\small Department of Mathematics\\
\small London School of Economics\\
\small London, WC2A 2AE\\
\small United Kingdom\and
Robert Brignall\footnotemark[1]\\[10pt]
\small School of Mathematics and Statistics\\
\small The Open University\\
\small Milton Keynes, MK7 6AA\\
\small United Kingdom}
\newcommand{\C}{\mathcal{C}}
\newcommand{\D}{\mathcal{D}}
\newcommand{\R}{\mathcal{R}}
\renewcommand{\S}{\mathcal{S}}
\newcommand{\rect}{\operatorname{rect}}
\newcommand{\Av}{\operatorname{Av}}
\newcommand{\Si}{\operatorname{Si}}
\theoremstyle:=definition,remark,plain\do{%
        \expandafter\g@addto@macro\csname th@\theoremstyle\endcsname{%
            \addtolength\thm@preskip\parskip
            }%
        }
\theoremstyle{plain}
\newtheorem{theorem}{Theorem}[section]
\newtheorem{question}[theorem]{Question}
\newtheorem{lemma}[theorem]{Lemma}
\newtheorem{proposition}[theorem]{Proposition}
\newtheorem{observation}[theorem]{Observation}
\newcommand\absdot[2]{
	\node at #1 {\normalsize $\bullet$};
	\node at #1 [below] {$#2$};
}
\newcommand\absdothollow[2]{
	\node at #1 {\normalsize \textcolor{white}{$\bullet$}};
	\node at #1 {\normalsize $\circ$};
	\node at #1 [below] {$#2$};
}
\newcommand{\plotperm}[1]{
	\foreach \j [count=\i] in {#1} {
		\absdot{(\i,\j)}{};
	};
}
\newcommand{\plotpermbox}[4]{
	\draw [darkgray, very thick, line cap=round]
		({#1-0.5}, {#2-0.5}) rectangle ({#3+0.5}, {#4+0.5});
}
\newcommand{\plotpermborder}[1]{
	\plotperm{#1};
	\foreach \i [count=\n] in {#1} {};
	\plotpermbox{1}{1}{\n}{\n};
}
\newcommand{\plotpinsequence}[1]{
	\absdot{(0,0)}{};
	\edef\n{0}
	\edef\s{0}
	\edef\e{0}
	\edef\w{0}
	\edef\x{0}
	\edef\y{0}
	\foreach \pin [remember=\pin as \oldpin (initially 1), count=\i] in {#1} {
		\ifthenelse{\pin=1 \OR \pin=2}{
			\ifthenelse{\oldpin=3}{
				\xdef\x{\number\numexpr\e-1}
			}{
				\xdef\x{\number\numexpr\w+1}
			}
			\ifnum\i=1 
				\pgfmathparse{\e+1}
 				\xdef\e{\pgfmathresult}
			\fi	
		}{ 
			\ifthenelse{\oldpin=1}{
				\xdef\y{\number\numexpr\n-1}
			}{
				\xdef\y{\number\numexpr\s+1}
			}
			\ifnum\i=1 
				\pgfmathparse{\s-1}
 				\xdef\s{\pgfmathresult}
			\fi	
		}
		\ifnum\pin=1 
			\pgfmathparse{\n+2}
 			\xdef\n{\pgfmathresult}		
			\absdot{(\x,\n)}{};
			\ifnum\i>1
				\draw (\x,\n) -- (\x,\y-0.5);
			\else
				\draw[gray,very thick] (-0.5,-0.5) rectangle (\x+0.5,\n+0.5);
			\fi
		\fi
		\ifnum\pin=2 
			\pgfmathparse{\s-2}
 			\xdef\s{\pgfmathresult}
			\absdot{(\x,\s)}{};
			\ifnum\i>1
				\draw (\x,\s) -- (\x,\y+0.5);
			\else
				\draw[gray,very thick] (-0.5,0.5) rectangle (\x+0.5,\s-0.5);
			\fi
		\fi
		\ifnum\pin=3 
			\pgfmathparse{\e+2}
 			\xdef\e{\pgfmathresult}
			\absdot{(\e,\y)}{};
			\ifnum\i>1
				\draw (\e,\y) -- (\x-0.5,\y);
			\else
				\draw[gray,very thick] (-0.5,+0.5) rectangle (\e+0.5,\y-0.5);
			\fi
		\fi
		\ifnum\pin=4 
			\pgfmathparse{\w-2}
 			\xdef\w{\pgfmathresult}
			\absdot{(\w,\y)}{};
			\ifnum\i>1
				\draw (\w,\y) -- (\x+0.5,\y);
			\else
				\draw[gray,very thick] (0.5,0.5) rectangle (\w-0.5,\y-0.5);

			\fi
		\fi		
	};
}
\begin{document}
\maketitle
\begin{abstract}
We characterise those permutation classes whose simple permutations are monotone griddable. This characterisation is obtained by identifying a set of nine substructures, at least one of which must occur in any simple permutation containing a long sum of 21s.
\end{abstract}

\section{Introduction}

A common route to understanding the structure of a permutation class (and hence, e.g.\ complete its enumeration) is via its simple permutations, as their structure can be considerably easier to characterise than the entire class. Albert, Atkinson, Homberger and Pantone~\cite{albert:deflatability} introduced the notion of \emph{deflatability} to study this phenomenon: that is, the property that the simples in a given permutation class $\C$ actually belong to a proper subclass $\D\subsetneq\C$. See also Vatter's recent survey~\cite{vatter:survey}.

One general case of deflatability is where the set of simple permutations of a class is finite. Such classes are well-quasi-ordered, finitely based, and have algebraic generating functions~\cite{albert:simple-permutat:}, and via a Ramsey-type result for simple permutations~\cite{brignall:decomposing-sim:}, it is decidable when a permutation class has this property~\cite{brignall:simple-permutat:b}.

In this paper, we look beyond classes with finitely many simples to those whose simples are `monotone griddable', and prove the following characterisation. We postpone formal definitions until later, but see Figure~\ref{fig-griddables} for examples of the structures mentioned.

\begin{theorem}\label{thm:griddable}
The simple permutations in a class $\C$ are monotone griddable if and only if $\C$ does not contain the following structures, or their symmetries:
\begin{compactitem}
\item arbitrarily long parallel sawtooth alternations,
\item arbitrarily long sliced wedge sawtooth alternations,
\item proper pin sequences with arbitrarily many turns, and
\item spiral proper pin sequences with arbitrarily many extensions.
\end{compactitem}
\end{theorem}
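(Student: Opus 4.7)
My overall plan is to prove both directions via the Huczynska--Vatter characterisation of monotone griddability: a permutation class is monotone griddable if and only if it does not contain arbitrarily long direct sums $21 \oplus 21 \oplus \cdots \oplus 21$ (or a symmetric variant, e.g.\ skew sums of $12$). This reduces the theorem to establishing an equivalence between the presence in $\C$ of the four listed structures (up to symmetry) and the existence of arbitrarily long sums of $21$ \emph{inside simple permutations of $\C$}.

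For the \emph{necessity} direction I would check that each of the four structure types, taken in sufficiently long instances, itself is (or contains) a simple permutation which in turn contains a long sum of $21$s. For the two sawtooth alternations this is essentially immediate from the defining picture, since they are built from many disjoint $21$-blocks joined by alternating connectors that destroy all nontrivial intervals. For the pin-sequence cases one uses the standard fact that the points of a proper pin sequence form a simple permutation, and then identifies many disjoint $21$ sub-patterns among the turns or the spiral extensions.

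For \emph{sufficiency} I argue the contrapositive. Suppose $\Si(\C)$ is not monotone griddable; then by Huczynska--Vatter there exist simple permutations $\sigma_n \in \Si(\C)$ each containing a sum $\tau_n = 21 \oplus 21 \oplus \cdots \oplus 21$ of length $n$ (applying a symmetry we may assume the sum-of-$21$s variant). Fix the embedding and denote the $i$-th block by its points $\{a_i,b_i\}$. Because $\sigma_n$ is simple, for every range $I$ of consecutive blocks the union $\bigcup_{i \in I} \{a_i,b_i\}$ is not an interval of $\sigma_n$, so there is a \emph{separating} point of $\sigma_n$ lying strictly inside the bounding box of this union. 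In particular, each individual block $\{a_i,b_i\}$ is separated, either in position or in value, by some point of $\sigma_n$.

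The heart of the argument is then a Ramsey-style extraction. I colour pairs $(i,j)$ by the qualitative relative position of the separator(s) for the range of blocks $[i,j]$ (above, below, left, or right of the relevant block, and nested or disjoint relative to neighbouring separators), pass to an infinite subsequence on which this colouring is monochromatic, and then show that each uniform pattern produces one of the listed structures: separators consistently on one monotone side yield a parallel sawtooth alternation; separators on opposite sides arranged in a wedge yield the sliced wedge variant; separators that chain together via further pinning points force a proper pin sequence, with the number of turns (or, in the bi-directional case, spiral extensions) growing with $n$. The main obstacle, as I see it, is the completeness of this case analysis, and specifically the pin-sequence regime: separators may be shared across several blocks, and the distinction between a pin sequence accumulating \emph{turns} versus a spiral sequence accumulating \emph{extensions} is delicate. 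Showing that every uniform separator pattern falls into exactly one of the four listed families, with no exotic configurations escaping the list, is where I expect the bulk of the technical effort to lie.
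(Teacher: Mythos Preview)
Your high-level reduction via Huczynska--Vatter is exactly right, and your treatment of the necessity direction is in line with the paper's. The gap is in the sufficiency direction: the ``Ramsey on separator patterns'' scheme, as stated, is not well-defined and would not by itself deliver the four structures. For a range $[i,j]$ of $21$-blocks there is no canonical separator to colour by; there may be many, and the relevant one depends on how deeper intervals of $\pi|_{\R}$ decompose, not just on the positions of nearby points. In particular, your colouring does not distinguish the case where most of the $21$s are buried inside a single interval of the substitution decomposition of $\pi$ restricted to the bounding box --- and it is precisely this case that generates the pin-sequence outcomes rather than the sawtooth ones. A monochromatic pair colouring on block indices cannot see that recursive structure.

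The paper's route is substantially different and worth knowing. It first proves an auxiliary result (Theorem~\ref{thm-main}): a simple permutation containing a long sum of $21$s must contain a long parallel or wedge sawtooth alternation, or a long increasing oscillation. This is obtained not by Ramsey on separators but by (i) a nested-rectangle argument using the substitution decomposition of $\pi$ restricted to the bounding box, which either finds many externally sliced $21$s (yielding a sawtooth via Erd\H{o}s--Szekeres) or isolates a simple subpermutation still containing many $21$s; and (ii) an analysis of a shortest \emph{right-reaching proper pin sequence} starting at the leftmost $21$, which forces an increasing oscillation once the sawtooth outcome is excluded. Since wedge sawtooths are not simple, a second step (Proposition~\ref{prop:wedge-sawtooth}) runs another right-reaching pin sequence inside a simple permutation containing a long wedge sawtooth, and a careful case analysis of how the pins interact with the wedge triples produces one of: a sliced wedge sawtooth, a pin sequence with many turns, or a spiral with many extensions. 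The right-reaching pin sequence, rather than a direct Ramsey extraction, is the engine that makes the case analysis finite and complete; your sketch would need to discover essentially this machinery to close the argument.
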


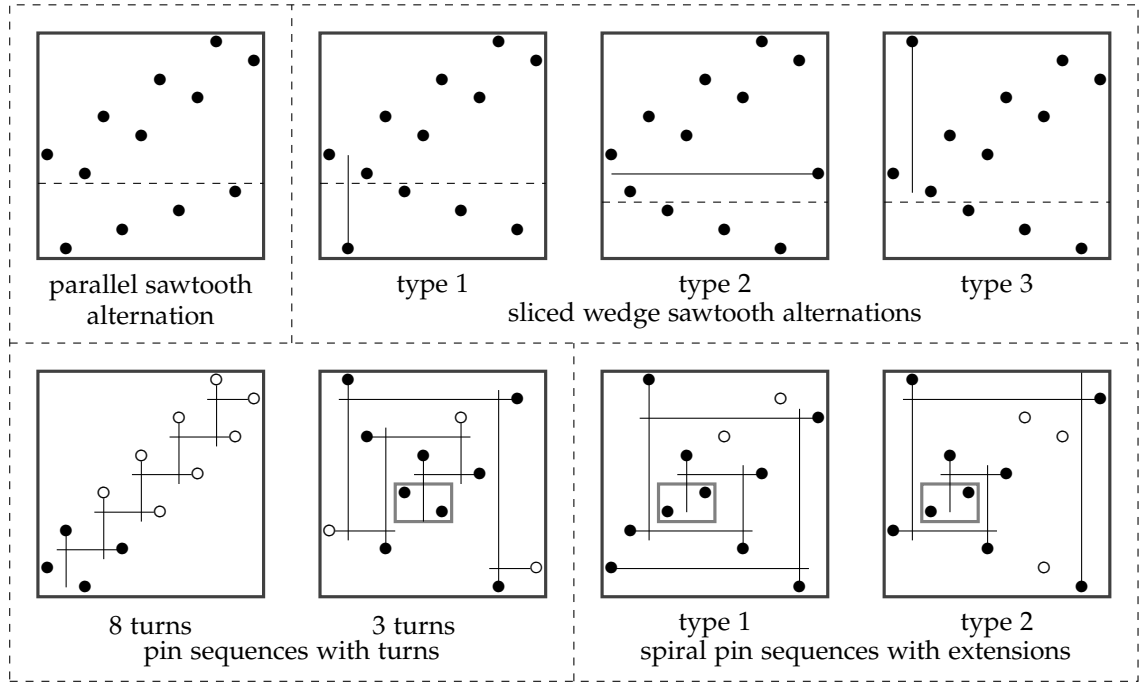
\begin{figure}
\centering
\begin{tikzpicture}[scale=0.25]
\begin{scope}[shift={(0,0)}]
\plotpermborder{6,1,5,8,2,7,10,3,9,12,4,11}
\draw[dashed] (0.5,4.5) -- (12.5,4.5);
\node at (6.5,-1) {\small parallel sawtooth};
\node at (6.5,-2.5) {\small alternation};
\end{scope}
\begin{scope}[shift={(15,0)}]
\plotpermborder{6,1,5,8,4,7,10,3,9,12,2,11}
\draw[dashed] (0.5,4.5) -- (12.5,4.5);
\draw (2,6) -- (2,1);
\node at (6.5,-1) {\small type 1};
\end{scope}
\begin{scope}[shift={(30,0)}]
\plotpermborder{6,4,8,3,7,10,2,9,12,1,11,5}
\draw[dashed] (0.5,3.5) -- (12.5,3.5);
\draw (1,5) -- (12,5);
\node at (6.5,-1) {\small type 2};
\node at (6.5,-2.5) {\small sliced wedge sawtooth alternations};
\end{scope}
\begin{scope}[shift={(45,0)}]
\plotpermborder{5,12,4,7,3,6,9,2,8,11,1,10}
\draw[dashed] (0.5,3.5) -- (12.5,3.5);
\draw (2,4) -- (2,12);
\node at (6.5,-1) {\small type 3};
\end{scope}
\begin{scope}[shift={(0,-18)}]
\plotpermborder{2,4,1,6,3,8,5,10,7,12,9,11}
\draw (2,1) -- (2,4);
\draw (1.5,3) -- (5,3);
\foreach \x/\y/\z/\w in {4/2.5/4/6,3.5/5/7/5,6/4.5/6/8,5.5/7/9/7, 8/6.5/8/10,7.5/9/11/9,10/8.5/10/12,9.5/11/12/11} {
	\draw (\x,\y) -- (\z,\w);
	\absdothollow{(\z,\w)}{};
}
\node at (6.5,-1) {\small 8 turns};
\end{scope}
\begin{scope}[shift={(20,-12)}]
\plotpermbox{-4}{-5}{7}{6}
\plotpinsequence{3,1,3,1,4,2,4,1,3,2}
\draw (7,-4) -- +(-2.5,0);
\absdothollow{(7,-4)}{};
\absdothollow{(-4,-2)}{};
\absdothollow{(3,4)}{};
\node at (0.5,-7) {\small 3 turns};
\end{scope}
\begin{scope}[shift={(30,-18)}]
\draw[very thick,gray] (3.5,4.5) rectangle (6.5,6.5);
\foreach \x/\y/\z/\w in {5/5/5/8,4.5/7/9/7,8/7.5/8/3,8.5/4/2/4, 3/3.5/3/12,2.5/10/12/10,11/10.5/11/1,11.5/2/1/2} {
	\draw (\x,\y) -- (\z,\w);
}
\plotpermborder{2,4,12,5,8,6,9,3,7,11,1,10}
\absdothollow{(7,9)}{};
\absdothollow{(10,11)}{};
\node at (6.5,-1) {\small type 1};
\end{scope}
\begin{scope}[shift={(45,-18)}]
\draw[very thick,gray] (2.5,4.5) rectangle (5.5,6.5);
\foreach \x/\y/\z/\w in {4/5/4/8,3.5/7/7/7,6/7.5/6/3,6.5/4/1/4, 2/3.5/2/12,1.5/11/12/11,11/12.5/11/1} {
	\draw (\x,\y) -- (\z,\w);
}
\plotpermborder{4,12,5,8,6,3,7,10,2,9,1,11}
\absdothollow{(8,10)}{};
\absdothollow{(9,2)}{};
\absdothollow{(10,9)}{};
\node at (6.5,-1) {\small type 2};
\end{scope}
\draw[dashed] (-1,-22) rectangle (59,14);
\draw[dashed] (-1,-4) -- (59,-4);
\draw[dashed] (29,-22) -- (29,-4);
\draw[dashed] (14,-4) -- (14,14);
\node at (14,-20.5) {\small pin sequences with turns};
\node at (44,-20.5) {\small spiral pin sequences with extensions};
\end{tikzpicture}
\caption{Examples of the permutations characterising the griddability of simples in Theorem~\ref{thm:griddable}.}
\label{fig-griddables}
\end{figure}

In general, classes whose simple permutations are monotone griddable do not immediately possess the range of properties that classes with only finitely many simples do. Indeed, few general properties are known even for classes that are themselves wholly monotone griddable, but this has not diminished the efficacy of the following characterisation for the structural understanding and enumeration of many classes (see, for example~\cite{aab:three-enumerations:}).

A \emph{sum of $k$ copies of 21} is the permutation $21\,43\,65\cdots (2k)(2k-1)$, written in one line notation. We will often abbreviate this to $\oplus_k 21$. Similarly, a \emph{skew sum of $k$ copies of 12} is the permutation $\ominus_k12 = (2k-1)(2k)\cdots34\,12$.

\begin{theorem}[Huczynska and Vatter~\cite{huczynska:grid-classes-an:}]\label{thm-hv-griddable}
A class $\C$ is monotone griddable if and only if it does not admit arbitrarily long sums of 21 or skew sums of 12. That is, for some $k$ neither $\oplus_k 21$ nor $\ominus_k 12$ belong to $\C$.
\end{theorem}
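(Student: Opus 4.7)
The forward direction is a pigeonhole argument. Suppose every $\pi\in\C$ lies in a monotone $r\times s$ gridding, and $\pi$ contains $\oplus_k 21$ at positions $p_1<\cdots<p_{2k}$. The $k$ leading positions $p_1,p_3,\ldots,p_{2k-1}$ distribute among the $rs$ cells, so once $k>rs$ two of them fall in a single cell, which must be increasing since their values are increasing. But then the trailing position of the earlier 21-pair, being strictly to the right of these two while strictly lower in value, must sit in a cell strictly below (and weakly right of) the shared cell; a short case check shows this conflicts with the position ordering against the later 21-pair. So $k\leq rs$, and analogously $\ell\leq rs$ for $\ominus_\ell 12$.

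For the converse I plan to induct on $k+\ell$, where $\C$ is assumed to avoid $\oplus_k 21$ and $\ominus_\ell 12$. The bases $k=1$ and $\ell=1$ force $\C$ to consist entirely of monotone permutations, trivially $1\times 1$ monotone gridded. For the inductive step, given $\pi\in\C$, the goal is to find a horizontal (value) or vertical (position) threshold that splits $\pi$ into two sub-permutations, one of which satisfies a strictly stronger avoidance hypothesis, while the other remains in $\C$. Applying the inductive hypothesis to both pieces, and then stacking or concatenating their griddings, produces a monotone gridding of $\pi$ whose dimensions depend only on $k$ and $\ell$.

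A natural candidate for the cut is obtained by extracting a greedy copy of $\oplus_{k-1}21$ from $\pi$ and cutting horizontally at the smallest ``high'' value of this copy. The lower half should then avoid $\oplus_{k-1}21$, because any such copy there would combine with an unused greedy pair to produce $\oplus_k 21$ in $\pi$, contradicting $\pi\in\C$. The upper half remains in $\C$ since it is a subpermutation.

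The principal obstacle is the combining step in this cut. A would-be copy of $\oplus_{k-1}21$ in the lower half need not interact with the greedy pairs in a position-favourable way, so a naive greedy extraction is insufficient. The fix---which is the technical heart of the Huczynska--Vatter argument---is to choose the extraction so that the value and position orderings of the greedy copy are simultaneously controlled, essentially by treating the descent pairs of $\pi$ as a two-dimensional poset and extracting a maximum chain or antichain through a Ramsey-style device. A secondary detail is verifying that the recursion has bounded depth and branching in $(k,\ell)$, so that the resulting grid size is a universal function of those parameters and not of the particular $\pi$.
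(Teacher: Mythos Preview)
The paper does not contain a proof of this theorem: it is quoted from Huczynska and Vatter~\cite{huczynska:grid-classes-an:} and used as a black box (see the sentence immediately following the definition of monotone griddability in Section~\ref{preliminaries}, where the authors note that the characterisation of Theorem~\ref{thm-hv-griddable} suffices and they do not need the definition itself). There is therefore nothing in the present paper to compare your attempt against.

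Assessing your proposal on its own merits, two issues stand out. First, your forward-direction sketch contains a slip: if the leading points $p_{2i-1}$ and $p_{2j-1}$ (with $i<j$) share a cell, then the trailing point $p_{2i}$ lies \emph{between} them by position, not ``strictly to the right of these two''. The argument can be repaired --- the three points $p_{2i-1},p_{2i},p_{2j-1}$ form a $213$ pattern inside a single column, forcing $p_{2i}$ into a different row, and iterating this kind of observation does give the bound --- but as written the case check you allude to does not go through.

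Second, and more importantly, what you have for the converse is an outline rather than a proof, and you say as much yourself. Your instinct is correct: the Huczynska--Vatter argument does proceed by induction on the pattern lengths and does hinge on finding a single horizontal or vertical slice so that one side satisfies a strictly stronger avoidance condition. But the ``combining step'' you flag as the principal obstacle is exactly the content of their proof, and your description of the fix (``treating the descent pairs of $\pi$ as a two-dimensional poset and extracting a maximum chain or antichain through a Ramsey-style device'') is too vague to count as an argument. In their paper the slice is obtained not via a greedy copy of $\oplus_{k-1}21$ but by a direct argument about where copies of $\oplus_{k-1}21$ and $\ominus_{\ell-1}12$ can first appear as one sweeps a line across $\pi$; you would need to supply something at that level of concreteness to turn your outline into a proof.
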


Aside from the structural information that it provides in its own right, the reason that this simple-to-check characterisation has proved so useful is that the classes to which it has been applied typically in fact possess the stronger property of being \emph{geometrically griddable}. Such classes are well-quasi-ordered, finitely based and have rational generating functions~\cite{albert:geometric-grid-:}. As we have no direct characterisation for a class to be geometrically griddable, the above theorem (which certainly provides a necessary condition) has often provided enough of a `hook' to solve the task at hand.

It is our hope that Theorem~\ref{thm:griddable} can provide a similar `hook' to ease the study of classes whose simple permutations are geometrically griddable. Any such class is known to be well-quasi-ordered, finitely based, and strongly algebraic (meaning that it and every subclass have algebraic generating functions), see Albert, Ru\v{s}kuc and Vatter~\cite{albert:inflations-of-g:}. Furthermore, every class with growth rate less than $\kappa\approx 2.20557$, is of this form~\cite{albert:inflations-of-g:}. For instances of practical enumeration tasks that have exploited the geometric griddability of the simple permutations, see Albert, Atkinson and Vatter~\cite{albert:inflations-of-g:2x4}, and Pantone~\cite{pantone:the-enumeration}.

Our characterisation in Theorem~\ref{thm:griddable} relies on the following auxiliary result, which guarantees the existence of certain types of structure in simple permutations that contain a long sum of 21s. 

\begin{theorem}\label{thm-main}
There exists a function $f(n)$ such that every simple permutation that contains a sum of $f(n)$ copies of 21 must contain a parallel or wedge sawtooth alternation of length $3n$ or an increasing oscillation of length $n$.
\end{theorem}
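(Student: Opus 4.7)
The plan is to exploit the simplicity of $\sigma$ by selecting, for each of the $k = f(n)$ copies of 21 in the embedded sum, a point of $\sigma$ that witnesses the failure of that copy to be an interval, and then to apply a sequence of Ramsey-style pigeonholes to distill one of the three target structures. Label the pairs of the embedded $\oplus_k 21$ as $(a_j, b_j)$ for $j = 1, \ldots, k$, where $a_j < b_j$ and $\sigma(a_j) > \sigma(b_j)$. Since $\{a_j, b_j\}$ is not an interval of the simple $\sigma$, there exists a point $w_j$ of $\sigma$ whose position lies strictly between $a_j$ and $b_j$ (a \emph{horizontal witness}) or whose value lies strictly between $\sigma(b_j)$ and $\sigma(a_j)$ (a \emph{vertical witness}). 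Inversion of $\sigma$ exchanges horizontal and vertical witnesses and the set of three target structures is closed under inversion, so I may assume by pigeonhole that a constant fraction of pairs admit horizontal witnesses.

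Next I would classify each horizontal witness by the location of $\sigma(w_j)$ relative to the value set $V$ of the embedded sum: either below all of $V$, above all of $V$, or in one of the $2k-1$ open gaps between consecutive elements of $V$. A single pigeonhole fixes the category. In the \emph{below} and \emph{above} categories, applying Erd\H{o}s--Szekeres to the witness value sequence yields a long monotone subsequence, and interleaving this subsequence with the corresponding 21-pairs directly produces a parallel sawtooth alternation (when the monotonicities agree) or a wedge sawtooth alternation (when they oppose). In the \emph{gap} category, a second pigeonhole splits on the comparison between the gap index $l(j)$ (with $v_{l(j)} < \sigma(w_j) < v_{l(j)+1}$) and the pair index $j$: the sub-cases $l(j) \le 2j-2$ and $l(j) \ge 2j$ reduce by a further Erd\H{o}s--Szekeres to the same sawtooth conclusion, since in each the witnesses sit wholly above or below their pair and behave, up to Erd\H{o}s--Szekeres, like the above/below cases.

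The remaining sub-case, where $l(j) = 2j-1$ for many $j$, is the main obstacle. Here the witness value lies strictly between the pair's own values, so the triple $\{a_j, w_j, b_j\}$ is a 321 pattern and the configuration is an induced $\oplus_k 321$. Since each such triple is again not an interval of $\sigma$, one picks a second-level witness for every block and repeats the pigeonhole. The iteration either eventually yields a monotone off-chain of witnesses---producing a sawtooth alternation---or generates a strictly nested sequence of breakers in which each new witness pins the preceding block from inside, and the latter configuration is the recursive combinatorial signature of an increasing oscillation. Each iteration level costs a Ramsey/Erd\H{o}s--Szekeres factor, so $f(n)$ must be taken as a tower that composes these losses over depth $n$; whenever a monotone off-chain appears at any earlier level it already has length at least $3n$ and hence produces a sawtooth alternation of the required size, while if the fully nested case persists to depth $n$ one reads off the increasing oscillation of length $n$ directly.
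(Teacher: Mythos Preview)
Your approach via witnesses and iterated Erd\H{o}s--Szekeres is natural but has two genuine gaps, and it misses the key device that makes the paper's proof work.

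\textbf{Gap 1: the off-diagonal gap cases.} When a horizontal witness $w_j$ has value in a gap $l(j)\neq 2j-1$ of the value set $V$, you assert that after Erd\H{o}s--Szekeres these witnesses ``behave like the above/below cases''. But a sawtooth alternation, as defined, requires the monotone sequence to lie \emph{alongside} the sum of 21s---entirely above, below, left, or right of it. Your witnesses have positions inside the range of the 21s (each $w_j$ sits between $a_j$ and $b_j$) and values interspersed among the values of $V$. Even after passing to a monotone subsequence of witness values, a witness $w_{j_t}$ will typically have value above several earlier pairs $j_{s}$ ($s<t$) in your selection, so the configuration you obtain is not a sawtooth alternation, nor does it obviously contain one. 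No further pigeonhole is indicated that would separate the witnesses from the selected 21s.

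\textbf{Gap 2: the nested case.} When $l(j)=2j-1$ for many $j$, you correctly obtain an embedded $\oplus_k 321$ and propose to iterate. But the second-level witness for a 321 block can be of many types (four position/value sub-cases, each with its own geometry), and you give no mechanism forcing the successive witnesses into the very specific alternating right/up pattern that defines an increasing oscillation. The sentence ``the latter configuration is the recursive combinatorial signature of an increasing oscillation'' is an assertion, not an argument: nested interval-breakers can produce many shapes other than oscillations, and nothing in your recursion distinguishes them.

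The paper's proof takes a completely different route: it uses a \emph{shortest right-reaching proper pin sequence} starting at the first 21 (Lemma~\ref{right-reaching}). This is the missing idea. Pin sequences give a trajectory through the simple permutation with controlled direction at each step. The argument first arranges (via Lemma~\ref{lem-sub-simple}, itself a substitution-decomposition induction) a chain of small simple blocks $\S_1,\dots,\S_{m+3}$ along the diagonal, each already saturated with sawtooth alternations. Then the pin sequence is analysed: any down or left pin after a certain point would create a sliced 21 spanning some $\S_j$, boosting $\rho(\pi)$ and finishing the sawtooth case; hence the tail of the pin sequence is forced to be up-right-up-right, which \emph{is} an increasing oscillation of the required length. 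This directional forcing is exactly what your witness iteration lacks.
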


See Figure~\ref{fig-sawtooth} for examples of the three types of unavoidable structure. Note that wedge sawtooth alternations are not necessarily simple, so the existence of wedge sawtooth alternations in a permutation class does not guarantee that the simple permutations are not monotone griddable, but Theorem~\ref{thm-main} nevertheless provides a sufficient condition.

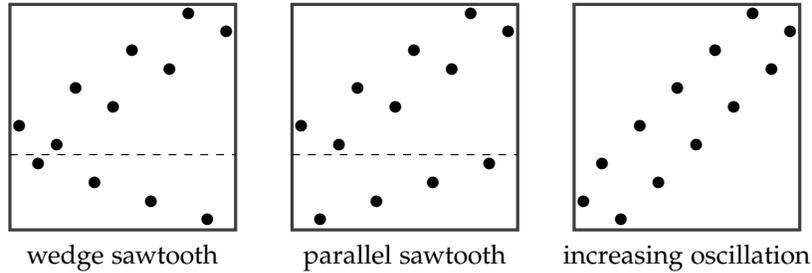
\begin{figure}
\centering
\begin{tikzpicture}[scale=0.25]
\begin{scope}[shift={(0,0)}]
\plotpermborder{6,4,5,8,3,7,10,2,9,12,1,11}
\draw[dashed] (0.5,4.5) -- (12.5,4.5);
\node at (6.5,-1) {\small wedge sawtooth};
\end{scope}
\begin{scope}[shift={(15,0)}]
\plotpermborder{6,1,5,8,2,7,10,3,9,12,4,11}
\draw[dashed] (0.5,4.5) -- (12.5,4.5);
\node at (6.5,-1) {\small parallel sawtooth};
\end{scope}
\begin{scope}[shift={(30,0)}]
\plotpermborder{2,4,1,6,3,8,5,10,7,12,9,11}
\node at (6.5,-1) {\small increasing oscillation};
\end{scope}
\end{tikzpicture}
\caption{Examples of the two types of sawtooth alternation and an increasing oscillation in the statement of Theorem~\ref{thm-main}.}
\label{fig-sawtooth}
\end{figure}

The rest of this paper is organised as follows. In Section~\ref{preliminaries} we introduce basic notions and define the structures mentioned in the above results. In Section~\ref{sec-main} we give the proof of Theorem~\ref{thm-main}, and in Section~\ref{sec-gridding} we complete the proof of Theorem~\ref{thm:griddable}. In the final section, we discuss some future directions for this work.

%
%
%
%
%
%
%
%
%
\section{Preliminaries}\label{preliminaries}

For definitions common to the wider study of permutation patterns, we refer the reader to Bevan's introduction~\cite{bevan2015defs}. For a broader background to the study of permutation classes, see Vatter's excellent survey~\cite{vatter:survey} in the \emph{Handbook of enumerative combinatorics}.

\subsection{Geometry, simplicity and gridding}

Critical to our work is the ability to visualise permutations and parts of permutations in the plane. The \emph{plot} of a permutation $\pi$ of length $n$ is the set of coordinates (or \emph{points}) $(i,\pi(i))$ for $i=1,\dots,n$. In a slight abuse of notation, we will rarely distinguish between a permutation and its plot. 

This exposes an important collection of symmetries that are available -- specifically the dihedral group generated by reflections in a vertical, horizontal, or diagonal axis. It is to these symmetries we refer when we make an appeal `by symmetry'. In particular to prove Theorem \ref{thm:griddable} it suffices to show that if the simple permutations of a class $\C$ contain arbitrarily long sums of 21, then configurations of one of the specified types must occur.

Given points $p_1,\dots,p_k$ in the plane (typically belonging to the plot of a permutation), denote by $\rect(p_1,\dots,p_k)$ the smallest axes-parallel rectangle that contains them. We call $\rect(p_1,\dots,p_k)$ the \emph{rectangular hull} of $p_1,\dots,p_k$. 

Let $\R$ be any axes-parallel rectangle in the plot of a permutation $\pi$.   The rectangle $\R$ divides the plot of $\pi$ into nine regions, and we identify the four `corners' as \emph{NE}, \emph{NW}, \emph{SE} and \emph{SW} of $\pi$ relative to $\R$, as illustrated in the following diagram.

\begin{center}
\begin{tikzpicture}
\draw (0,0) grid (3,3);
\node at (1.5,1.5) {$\R$};
\node at (2.5,2.5) {NE};
\node at (0.5,2.5) {NW};
\node at (2.5,0.5) {SE};
\node at (0.5,0.5) {SW};
\end{tikzpicture}
\end{center}

For the rectangle $\R$ itself, denote by $\pi|_{\R}$ the permutation that is order isomorphic to the points of $\pi$ contained in $\R$. 

Any point (or collection of points) that lies in one of the four unlabelled regions in the above picture is said to \emph{slice} the rectangle $\R$. Put formally, if $\R=[a,b]\times [c,d]$ is a rectangle, then the point $(x,y)$ \emph{slices} $\R$ \emph{vertically} if $x\in (a,b)$ and $y\not\in [c,d]$, and \emph{horizontally} if $x\not\in[a,b]$ and $y\in(c,d)$. By extension, we say that a point \emph{slices} a collection of points in the plane if it slices their rectangular hull.

An \emph{interval} of a permutation $\pi$ is a (nonempty) set of points $\{(i,\pi(i)): i\in I\}$ for some set of indices $I$, such that both $I$ and $\pi(I)=\{\pi(i):i\in I\}$ form contiguous sets of natural numbers.

We can easily identify an interval geometrically by noting that $\rect((i,\pi(i)):i\in I)$ cannot be sliced, and must contain only points corresponding to indices from $I$. Equivalently, the (nonempty) set of points of $\pi$ belonging to an unsliced axes-parallel rectangle $\R$ form an interval. 

Trivially, every singleton of $\pi$ and the whole of $\pi$ form intervals. If there are no other intervals and $\pi \neq 1$, then $\pi$ is said to be \emph{simple}. 

Given a permutation $\sigma$ of length $n$, and permutations $\pi_1,\dots,\pi_n$, the \emph{inflation} of $\sigma$ by $\pi_1,\dots, \pi_n$ is the permutation obtained by replacing each entry $\sigma(i)$ by a sequence of points forming an interval order isomorphic to $\pi_i$, and with the intervals in the same relative ordering as the corresponding points of $\sigma$. This permutation is commonly denoted by $\sigma[\pi_1,\dots,\pi_n]$.

The reverse process to inflation (i.e., decomposing a permutation into intervals) forms the basis for the \emph{substitution decomposition}, the essence of which is captured in the following result.

\begin{proposition}[Albert and Atkinson~\cite{albert:simple-permutat:}]\label{prop-mod-decomp}
Every permutation $\pi$ is expressible as the inflation of a unique simple permutation $\sigma$. Furthermore, if $|\sigma|\geq 4$, then in the expression
\[\pi = \sigma[\pi_1,\dots,\pi_n],\]
 the intervals $\pi_1,\dots\pi_n$ are also unique.
\end{proposition}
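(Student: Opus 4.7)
The plan is to exhibit $\sigma$ as the quotient of $\pi$ by a canonical partition into intervals, using as the primary tool a closure lemma for intervals: if $I$ and $J$ are intervals of $\pi$ with $I \cap J \neq \emptyset$, then $I \cup J$ and $I \cap J$ are both intervals, and so is $I \setminus J$ whenever that set is nonempty. This follows directly from the definition, since overlapping contiguous sets of positions remain contiguous under union and intersection, and likewise on the value side.

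With this in hand, for $\pi$ of length at least $2$ I would consider the collection of \emph{maximal} proper intervals of $\pi$. If these intervals are pairwise disjoint, they partition the positions of $\pi$; otherwise the closure lemma forces overlapping maximal intervals to merge into a single interval, which by maximality must either coincide with one of them or exhaust all of $\pi$. In the former case, collapsing each maximal interval to a point yields a permutation $\sigma$ that is simple -- any non-trivial interval of the quotient would pull back to an interval of $\pi$ strictly containing a maximal proper interval, a contradiction -- so $\pi = \sigma[\pi_1, \ldots, \pi_n]$ with the $\pi_i$ given by the blocks. In the latter case, $\pi$ is the maximal sum or skew sum of $\oplus$- (respectively $\ominus$-)indecomposable factors, producing $\sigma \in \{12, 21\}$ acting on a uniquely determined list of indecomposables. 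This establishes existence.

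For uniqueness of $\sigma$, I would observe that in any decomposition $\pi = \sigma'[\pi_1', \ldots, \pi_m']$ with $\sigma'$ simple the blocks $\pi_i'$ correspond to an interval partition of $\pi$, and this must align with the canonical partition constructed above: when $|\sigma'| \geq 4$ simplicity of $\sigma'$ precludes any coarsening or refinement via the closure lemma, and when $|\sigma'| = 2$ the maximality of the indecomposable factors in the sum or skew sum pins down whether $\sigma'$ is $12$ or $21$. For uniqueness of the intervals when $|\sigma| \geq 4$, suppose $\pi = \sigma[\pi_1, \ldots, \pi_n] = \sigma[\pi_1', \ldots, \pi_n']$; two distinct interval partitions both quotienting to the same $\sigma$ would, by the closure lemma applied to a pair of differing blocks, yield a proper interval $J$ of $\pi$ whose image in $\sigma$ is a non-singleton, non-full interval, contradicting simplicity of $\sigma$.

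I expect the main obstacle to be the careful bookkeeping that distinguishes the ``merged-to-all-of-$\pi$'' case from the ``disjoint-maximal-intervals'' case, and in particular ensuring that the sum or skew-sum decomposition aligns correctly with $\sigma \in \{12, 21\}$. The restriction $|\sigma| \geq 4$ in the second half of the proposition is essential precisely because monotone $\sigma$ of length $2$ admits multiple interval regroupings (for instance $1234 = 12[1,123] = 12[12,12] = 12[123,1]$), so any argument for uniqueness of intervals must visibly break down in the $|\sigma| = 2$ case; keeping this boundary visible throughout the closure-lemma manipulations is the principal hurdle.
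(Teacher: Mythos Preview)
The paper does not prove this proposition at all: it is quoted from Albert and Atkinson~\cite{albert:simple-permutat:} and used as a black box, so there is no ``paper's own proof'' against which to compare your attempt.

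Your outline is the standard route to this result and is essentially sound, but two places would need tightening before it becomes a proof. First, the dichotomy ``maximal proper intervals pairwise disjoint'' versus ``some pair overlaps'' does not align cleanly with ``$|\sigma|\ge 4$'' versus ``$\sigma\in\{12,21\}$'': for example $\pi=132$ has disjoint maximal proper intervals $\{1\}$ and $\{2,3\}$, and the quotient is $12$. This does no damage to existence, but your narrative implies the overlapping case is the sole source of $\sigma\in\{12,21\}$, which is not so. Second, in the overlapping case you should not stop at ``$I\setminus J$, $I\cap J$, $J\setminus I$ are intervals partitioning $\pi$'' (three blocks does not obviously yield $\sigma\in\{12,21\}$); the actual argument is that if $I\cup J=\pi$ with $I,J$ proper, then the value sets of $I$ and $J$ must together cover $[1,n]$, forcing one of them to be an initial or final segment of values, and since its position set is already an initial or final segment this exhibits $\pi$ as sum- or skew-decomposable. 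Also, the clause ``$I\setminus J$ is an interval whenever nonempty'' is false as stated (take $J$ a strict interior sub-interval of $I$); you need the extra hypothesis that neither of $I,J$ contains the other.
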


For a permutation which is the inflation of a simple permutation of length 2 (i.e., $\sigma=12$ or $21$), we do not have the same guarantee of uniqueness of the intervals (although there are methods to recover this if needed). If $\pi=12[\pi_1,\pi_2]$ for some permutations $\pi_1$ and $\pi_2$, then we also write $\pi=\pi_1\oplus \pi_2$, and say that $\pi$ is \emph{sum decomposable}. Any permutation that is not sum decomposable is \emph{sum indecomposable}. Similarly, if $\pi=21[\pi_1,\pi_2]$ we write $\pi=\pi_1\ominus \pi_2$ and say that $\pi$ is \emph{skew decomposable}, otherwise $\pi$ is \emph{skew indecomposable}. Finally, the case where $\pi$ is both sum and skew indecomposable corresponds to the case in Proposition~\ref{prop-mod-decomp} where the unique simple permutation has length at least 4 (as there are no simple permutations of length 3).

For completeness, we now briefly introduce the notion of griddability. However, we do not actually require this definition in our work (the characterisation provided by Theorem~\ref{thm-hv-griddable} suffices). A class $\C$ is \emph{monotone griddable} if there exist integers $h$ and $v$ such that for every permutation $\pi \in \C$, we may divide the plot of $\pi$ into cells using at most $h$ horizontal and $v$ vertical lines, in such a way as the points in each cell form a monotone increasing or decreasing sequence (or the cell is empty).

\subsection{Pin sequences}

Following~\cite{brignall:decomposing-sim:}, a \emph{pin sequence} is a sequence of points $p_1, p_2,\dots$ in the plot of $\pi$ such that for each $i\ge 3$, $p_i$ slices $\rect(p_1,\dots,p_{i-1})$. Each pin $p_i$ for $i\ge 3$ has a direction -- one of \emph{left}, \emph{right}, \emph{up} or \emph{down} -- based on its position relative to the rectangle that it slices.  By convention, the pins $p_1$ and $p_2$ will be regarded as having no direction.

A {\it proper pin sequence\/} is one that satisfies two additional conditions:
\begin{itemize}
\item \emph{Maximality}: each pin must be maximal in its direction.  For example, if $p_i$ is a right pin, then there are no points further to the right of $p_i$ that slice $\rect(p_1,\dots,p_{i-1})$.
\item \emph{Separation}: $p_{i+1}$ must \emph{separate} $p_i$ from $\{p_1,\dots,p_{i-1}\}$.  In other words, $p_{i+1}$ must lie horizontally or vertically between $\rect(p_1,\dots,p_{i-1})$ and $p_i$.
\end{itemize} 

As all pin sequences required in the sequel will be proper, for brevity we will sometimes use the term `pin sequence' to mean a proper pin sequence. We now recall some basic properties of (proper) pin sequences.

\begin{proposition}[Brignall, Huczynska and Vatter~\cite{brignall:decomposing-sim:}]\label{pinseq1}
In a (proper) pin sequence $p_1,\dots,p_m$,
\begin{enumerate}[(a)]
\item $p_{i+1}$ cannot lie in the same or opposite direction as $p_i$ (for all $i\ge 3$);
\item $p_{i+1}$ does not slice $\rect(p_1,\dots,p_{i-1})$;
\item $p_i$ and $p_{i+1}$ are separated either by $p_{i-1}$ or by each of $p_1, \dots, p_{i-2}$; and
\item one of the sets of points $\{p_1,\dots,p_m\}$, $\{p_1,\dots,p_m\}\setminus\{p_1\}$, or $\{p_1,\dots,p_m\}\setminus\{p_2\}$ is order isomorphic to a simple permutation.
\end{enumerate}
\end{proposition}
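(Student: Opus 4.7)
The plan is to verify each of the four parts by unpacking the maximality and separation axioms of a proper pin sequence, with parts (a)--(c) relying on a short analysis of the directions of the last few pins, and (d) requiring a more delicate argument about the absence of nontrivial intervals.

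For (a), I would argue by contradiction. Using the dihedral symmetries, take $p_i$ to be a right pin. If $p_{i+1}$ were also a right pin, then because $p_i$ being a right pin forces the vertical range of $\{p_1,\dots,p_i\}$ to coincide with that of $\{p_1,\dots,p_{i-1}\}$, the pin $p_{i+1}$ would also slice $\rect(p_1,\dots,p_{i-1})$ from the right, contradicting the maximality of $p_i$. If $p_{i+1}$ were a left pin, separation would require $p_{i+1}$ to lie between $\rect(p_1,\dots,p_{i-1})$ and $p_i$, whereas a left pin must lie to the left of $\rect(p_1,\dots,p_i)$, which is on the wrong side. Part (b) then follows as a direct consequence: by (a), $p_{i+1}$ has direction orthogonal to $p_i$, and the separation axiom then places $p_{i+1}$ strictly outside $\rect(p_1,\dots,p_{i-1})$ on the side of $p_i$, putting it in one of the NE/NW/SE/SW corners of that rectangle and thus not slicing it.

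For (c), maintain the convention that $p_i$ is a right pin and let $p_{i+1}$ be (say) a top pin. By (a) applied to the step $p_{i-1}\to p_i$, the pin $p_{i-1}$, when directed, is either a top pin or a bottom pin. In the top case, $p_{i-1}$ realises the maximum $y$-coordinate among $\{p_1,\dots,p_{i-1}\}$; since $p_i$ lies in that $y$-range and $p_{i+1}$ lies strictly above all of $p_1,\dots,p_i$, the pin $p_{i-1}$ separates $p_i$ and $p_{i+1}$ vertically. In the bottom case, horizontal separation of $p_{i-1}$ from $\rect(p_1,\dots,p_{i-2})$ by $p_i$ is impossible (since $p_i$ lies to the right of both), so the separation axiom forces $p_i$'s $y$-coordinate to lie strictly below the $y$-range of $p_1,\dots,p_{i-2}$; consequently every $p_j$ with $j\le i-2$ has $y$-coordinate strictly between those of $p_i$ and $p_{i+1}$. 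The small cases with $i=3$ and $p_{i-1}$ undirected are handled by a short direct check with the same ingredients.

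Part (d) is the main obstacle. My plan is to assume $J$ is a nontrivial proper interval of the resulting pin set and to derive a contradiction by taking the smallest index $k$ with $p_k\notin J$. Generically, $p_k$ slices $\rect(p_1,\dots,p_{k-1})$ by definition; leveraging (a)--(c), this slicing upgrades to $p_k$ slicing $\rect(J)$ as soon as $J$ contains some pin of index larger than $k$, contradicting intervallity. The exceptional configurations--where $J$ is a small interval located among the very first pins (for example $\{p_1,p_3\}$ or $\{p_2,p_3\}$, which can indeed arise before the sequence settles into its orthogonal direction pattern)--are precisely those eliminated by removing the relevant one of $p_1$ or $p_2$. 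The most delicate part of the argument, which I expect to require careful bookkeeping, is verifying that this list of exceptional intervals is exhaustive and that a single choice of which undirected pin to remove suffices to destroy all of them simultaneously.
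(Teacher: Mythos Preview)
The paper does not actually prove this proposition: it is quoted verbatim from Brignall, Huczynska and Vatter~\cite{brignall:decomposing-sim:} and used as a black box, so there is no in-paper proof to compare your proposal against.

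That said, your outline is sound and is essentially the standard argument one finds in the original source. Parts (a)--(c) are exactly right: the same/opposite-direction exclusion comes from maximality and separation respectively, (b) is an immediate corollary of (a) plus separation placing $p_{i+1}$ in a corner of $\rect(p_1,\dots,p_{i-1})$, and your case split on the direction of $p_{i-1}$ for (c) is the natural one. For (d), your strategy of taking the least index $k$ with $p_k\notin J$ and using the slicing relation to contradict intervality is the right idea, and your identification of the exceptional intervals among $\{p_1,p_2,p_3\}$ as the only possible obstructions (handled by deleting one of $p_1,p_2$) is correct. The bookkeeping you anticipate is real but not deep: once you observe via (b) that for $k\ge 4$ the pin $p_k$ lies in a corner of $\rect(p_1,\dots,p_{k-2})$ while $p_{k-1}$ lies in an adjacent corner, any later pin in $J$ forces $\rect(J)$ to swallow enough of the earlier rectangle that $p_k$ genuinely slices it.
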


See the lower-left part of Figure~\ref{fig-griddables} for examples of pin sequences. 

The following result is critical to what will follow later. A pin sequence $p_1,\dots,p_m$ in a permutation $\pi$ is said to be \emph{right reaching} if $p_m$ is the rightmost point of $\pi$. 

\begin{lemma}[Brignall, Huczynska and Vatter~\cite{brignall:decomposing-sim:}]\label{right-reaching}
For every simple permutation $\pi$ and pair of points $p_1$ and $p_2$ (unless, trivially, $p_1$ is the right-most point of $\pi$), there is a (proper) right-reaching pin sequence beginning with $p_1$ and $p_2$.
\end{lemma}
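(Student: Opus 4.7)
The plan is to construct the sequence inductively by extending one proper pin at a time, using a greedy strategy that biases extensions toward the right so that the rightmost point $q$ of $\pi$ is eventually reached. Throughout, I maintain the invariant that $p_1,\ldots,p_i$ is a proper pin sequence with $p_j \neq q$ for all $j \leq i$, extending by some $p_{i+1}$ at each step and terminating when $p_{i+1} = q$.

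For the extension step, set $R_i = \rect(p_1,\ldots,p_i)$. Since $q$ has strictly greater $x$-coordinate than every $p_j$ with $j \le i$, we have $q \notin R_i$, so $R_i$ is not all of $\pi$. By simplicity, the points of $\pi$ inside $R_i$ cannot form an interval, hence $R_i$ admits a slicing point. I select $p_{i+1}$ from among the slicing points, choosing it to be maximal in its direction and to lie in the separator region between $p_i$ and $\rect(p_1,\ldots,p_{i-1})$; Proposition~\ref{pinseq1}(a) further constrains the direction to be perpendicular to that of $p_i$ when $i \geq 3$. To drive the construction toward $q$, I prefer rightward extensions whenever compatible with these constraints, and otherwise pick vertical extensions on the right half of $R_i$ so as to prepare for a rightward extension on the next step.

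Termination follows because each new pin strictly enlarges $R_i$ and $\pi$ is finite, so after finitely many steps either $p_{i+1} = q$ or $R_i$ already covers the bounding box of $\pi$; in the latter case, the final extension is forced to an extremum of $\pi$, and the greedy rightward bias ensures this extremum is $q$. The main obstacle lies in the extension step: one must verify that a slicing point satisfying both the separation and direction constraints always exists, and moreover that a rightward-biased choice remains achievable. The trickiest situation is when two consecutive vertical extensions appear forced, threatening to stall rightward progress; the resolution is a careful case analysis leveraging the maximality of $p_i$ and the observation that if no valid separator existed, then the points of $\pi$ lying in $R_i$ together with the nearby non-separator slicing points would combine to form an interval of $\pi$, contradicting simplicity.
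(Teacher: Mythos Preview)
The paper does not supply its own proof of this lemma --- it is quoted from \cite{brignall:decomposing-sim:} and used as a black box --- so there is no in-paper argument to compare against. I can only assess your sketch on its own merits.

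Your overall strategy (greedy inductive extension, biased rightward) is the same as in the cited source, but the two load-bearing steps are asserted rather than proved. First, the existence of a valid $p_{i+1}$: you correctly note that simplicity forces $R_i$ to be sliced whenever $q\notin R_i$, but a slicing point need not lie in the separator strip between $p_i$ and $\rect(p_1,\ldots,p_{i-1})$ --- it could equally well slice $R_{i-1}$ on the side opposite $p_i$, violating separation. You explicitly flag this as ``the main obstacle'' and then defer it to ``a careful case analysis'', but that analysis \emph{is} the content of the lemma. The interval one must exhibit to derive a contradiction is not simply ``the points in $R_i$ together with the nearby non-separator slicing points''; identifying the right interval requires a genuine use of the maximality of $p_i$ in its direction, and you have not done this.

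Second, termination at $q$: even granting that the sequence can always be extended and must eventually exhaust $\pi$, you have not shown that the final pin is the rightmost point rather than, say, the topmost or leftmost. ``The greedy rightward bias ensures this extremum is $q$'' is an assertion, not an argument --- you must rule out the scenario in which, at every stage where a horizontal pin is due, only a left pin lies in the separator strip. (Your aside about ``two consecutive vertical extensions'' being forced is also confused: by Proposition~\ref{pinseq1}(a), consecutive pins in a proper pin sequence alternate between horizontal and vertical, so this cannot occur.) The original proof handles both issues together by arguing directly that so long as $q\notin\{p_1,\ldots,p_i\}$, a separating pin exists and the choices can be made so that $q$ itself is eventually taken as a right pin.
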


We now introduce some new terminology relating to pins that we will require for our characterisation. Let $p_1,\dots,p_m$ be a pin sequence in a permutation $\pi$. We say that a pin $p_i$ \emph{turns} if the direction of $p_i$ is the same as the direction of $p_{i-2}$. The significance of this concept is in the following observation.

\begin{lemma}\label{lem-turns}
If a pin sequence $p_1,\dots,p_m$ contains $3(p+q)$ turns, then the permutation corresponding to $p_1,\dots,p_m$ contains $\oplus_p 21$ or $\ominus_q 12$.\end{lemma}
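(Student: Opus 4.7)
My plan is to proceed by a case analysis on turn type combined with a pigeonhole argument. Each turn at $p_i$ is characterised by two directions: that of $p_i$ itself (four possibilities, $U, D, L, R$) and that of the separator $p_{i-1}$ (perpendicular to $p_i$, so two possibilities), giving eight types in total. A routine analysis of each type, using the maximality and separation properties of proper pin sequences, shows that the pair $(p_{i-1}, p_i)$ in position order forms either a $21$- or a $12$-pattern, and that this pair is localised in one of the four corners (top-right, top-left, bottom-right, bottom-left) of $\rect(p_1,\dots,p_{i-1})$. The four types producing $21$-pairs correspond to the top-right or bottom-left corner, and the four producing $12$-pairs to the top-left or bottom-right.

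Now the pigeonhole. Among the $3(p+q) = 3p + 3q$ turns, either at least $3p$ produce $21$-pairs or at least $3q$ produce $12$-pairs; by the dihedral symmetries of the problem, I may assume the former. I then argue that these $3p$ $21$-producing turns yield $\oplus_p 21$. A second pigeonhole over the two possible corners leaves at least $\lceil 3p/2 \rceil$ turns of a single orientation, and the maximality of pins ensures that consecutive turns of compatible sub-type produce $21$-pairs whose $y$-ranges are pushed further into the corresponding extremal corner, so that a monotone up-right staircase can be read off from them. A careful selection (sometimes using the seed pins $p_1, p_2$ or the intermediate separator pins as auxiliary anchors) extracts the $p$ stacked $21$-pairs needed for $\oplus_p 21$. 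The analogous analysis applies in the $12$-case to yield $\ominus_q 12$.

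The main obstacle I expect is this final selection step: ensuring that the $p$ chosen $21$-pairs truly stack, even when the pin sequence interleaves different sub-types in the same corner (for example, alternating between $RU$ and $UR$ turns in the top-right corner, or $LD$ and $DL$ in the bottom-left). In such mixed situations, the $y$-ranges of consecutive same-corner pairs can overlap, and one must choose a sparser subsequence of pairs, occasionally replacing a pair by one involving an earlier separator or a seed, to force the ranges to separate. The factor of $3$ in the bound absorbs the losses from both pigeonhole steps (the $21$-vs-$12$ split, and the corner split within each) together with this selection, and while the constant could probably be sharpened by a finer analysis it is not needed for the applications in the rest of the paper.
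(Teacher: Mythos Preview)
Your plan contains the right ingredients but has one imprecision and one genuine arithmetic gap.

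First, the pair $(p_{i-1},p_i)$ is \emph{not} in a corner of $\rect(p_1,\dots,p_{i-1})$: indeed $p_{i-1}$ lies in that rectangle and $p_i$ slices it. The correct (and crucial) statement is that this pair lies in a corner of $\rect(p_1,\dots,p_{i-3})$, three pins back. This is what the factor~$3$ is really paying for.

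Second, your corner pigeonhole is both unnecessary and fatal to the arithmetic. Once you know the pair from a turn at $i$ sits NE or SW of $\rect(p_1,\dots,p_{i-3})$, two $21$-turns at indices $i<i'$ with $i'\ge i+3$ automatically stack regardless of which corner each occupies: the later pair is NE or SW of a rectangle containing the earlier pair, so the collection of such pairs is linearly ordered by the NE/SW relation and forms an $\oplus 21$ directly. Thus from $3p$ $21$-turns a greedy spacing-$3$ selection already yields $p$ stacked pairs. By contrast, if you first halve to $\lceil 3p/2\rceil$ same-corner turns, you still need the spacing-$3$ selection afterwards (turns at distance~$1$ share a point; turns of the same sub-type at distance~$2$ give a $2413$ pattern, not $2143$), and you are left with only about $p/2$ pairs --- not enough.

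The paper's proof is precisely the clean version of this. It inducts on $k$: take the last turn $p_j$, drop $p_{j-2},p_{j-1},p_j$ (losing at most three turns), observe that $(p_{j-1},p_j)$ forms a $21$ or $12$ in a corner of $\rect(p_1,\dots,p_{j-3})$, and append it to whichever of the inductively guaranteed $\oplus 21$ or $\ominus 12$ chains it extends. No case split over corners, no selection step.
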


\begin{proof}
For a pin sequence $p_1,\dots,p_m$, let $p$ denote the length of the longest sum of 21s and $q$ the length of the longest skew sum of 12s  that the permutation corresponding to the sequence contains. Let $\ell(p_1,\dots,p_m) = p+q$. 

We will prove by induction on $k$ the following statement: every pin sequence $p_1,\dots,p_m$ containing $3k$ turns satisfies $\ell(p_1,\dots,p_m) \ge k$. The lemma will follow directly.

The base case $k=0$ is trivially true, so let $p_1,\dots,p_m$ be a pin sequence with $3(k+1)$ turns. Let $p_j$ be the latest pin in the sequence which forms a turn, noting that $j\geq 3(k+1)+ 3\geq 6$ (since the first three pins cannot be turns). By symmetry, we may assume that $p_j$ is a right pin (and hence so is $p_{j-2}$) and $p_{j-1}$ an up pin. Note also that $p_1,\dots,p_{j-3}$ contains at least $3(k+1)-3 = 3k$ turns, so by induction we know that $\ell(p_1,\dots, p_{j-3})\geq k$. We now have the following situation:
\begin{center}
\begin{tikzpicture}[scale=0.3]
\draw (0,0) rectangle (4,3);
\foreach \x/\y/\z/\w in {2/1/6/1,5/0.5/5/5,4.5/4/7/4} {
	\draw (\x,\y) -- (\z,\w);
	\absdot{(\z,\w)}{};
}
	\node at (7.9,1) {$p_{j-2}$};
	\node at (3.1,5) {$p_{j-1}$};
	\node at (8.1,4) {$p_{j}$};
\end{tikzpicture}
\end{center}
By inspection, we see that the pair of points $p_{j-1},p_j$ forms a copy of 21 that is NE of $\rect(p_1,\dots,p_{j-3})$, and so we may add this copy of 21 to the longest sum of 21s that can be found in $\rect(p_1,\dots,p_{j-3})$. Thus we conclude $\ell(p_1\dots,p_m) \geq \ell(p_1,\dots,p_j) \geq k+1$.
\end{proof}

\subsection{The permutations in Theorems~\ref{thm:griddable} and~\ref{thm-main}}\label{subsec-structures}

\paragraph{Sawtooth alternations} A \emph{sawtooth alternation of length $3n$} is a permutation on $3n$ points that contains a sum of $n$ copies of 21 placed alongside (horizontally or vertically) a monotone sequence of $n$ points, in such a way as each copy of 21 is sliced by a single entry from the monotone sequence -- see the first two illustrations in Figure~\ref{fig-sawtooth}.\footnote{Note that for convenience we are only using the term `sawtooth alternation' in respect of sums of 21. By symmetry there are analogous structures that involve skew sums of 12, but we do not need to give these special names.}

We divide the family of sawtooth alternations into two types: a \emph{parallel sawtooth alternation} is one in which the monotone sequence is increasing, while a \emph{wedge sawtooth alternation} is one in which the  monotone sequence is decreasing. See Figure~\ref{fig-sawtooth} (on page~\pageref{fig-sawtooth}).

It is easy to verify that for $n\geq 2$, the parallel sawtooth alternations of length $3n$ are simple. On the other hand, no wedge sawtooth alternation is simple. This fact underpins the extra work that is required in order to get from Theorem~\ref{thm-main} to Theorem~\ref{thm:griddable}. 

To recover simplicity in wedge sawtooth alternations, consider the sawtooth alternation shown on the left of Figure~\ref{fig-sawtooth}. The leftmost three points of this permutation forms an interval that is order isomorphic to 312. In order to break this (and every other) interval, we form \emph{sliced} wedge sawtooth alternations in one of three ways: pull the `1' of this 312 below all the other points of the monotone sequence (\emph{type 1}), pull the `2' to the right of all other points of the permutation (\emph{type 2}), or replace the `1' with a new maximal element in the permutation (\emph{type 3}). See the top-right portion of Figure~\ref{fig-griddables} (on page~\pageref{fig-griddables}). For wedge sawtooth alternations that are oriented differently, we make the analogous definitions by appealing to symmetry.

\paragraph{Proper pin sequences with turns} As defined in the previous subsection, a \emph{turn} in a pin sequence is a pin $p_i$ that has the same direction as $p_{i-2}$. By Lemma~\ref{lem-turns}, a pin sequence that contains a lot of turns also contains a long sum of 21s or skew sum of 12s (or both). 
See the bottom left portion of Figure~\ref{fig-griddables}, where the pins that are turns have been marked with hollow points. By Proposition~\ref{pinseq1}(d), pin sequences with turns either correspond to simple permutations, or they correspond to permutations for which we may remove one point to recover a simple permutation. 

\paragraph{Increasing oscillations} An \emph{increasing oscillation} of length $n$ is a permutation on $n$ points formed by a pin sequence that starts from a copy of 21, and then is entirely made up of right and up pins. (That is, for every $i\geq 5$ the pin is a turn.) There are two increasing oscillations of each length, which may be obtained from one another by symmetry. See the rightmost illustration in Figure~\ref{fig-sawtooth} (on page~\pageref{fig-sawtooth}).

\paragraph{Extended spiral pin sequences} A pin sequence $p_1,\dots,p_m$ that contains no turns must either follow the repeating pattern of directions `left, up, right, down' or `left, down, right, up'. We call both of these pin sequences \emph{spirals}. 

Unlike pin sequences with many turns, spiral pin sequences do not contain long sums of 21 or skew sums of 12 (indeed, spiral pin sequences are contained in the class of skew-merged permutations, $\Av(3412,2143)$). To recover long sums of 21, we add points in specific locations that we call \emph{extensions}. We will consider two types. For ease of explanation, we assume that $p_{i-1}$ is an up pin and $p_{i}$ a right pin; all other cases follow by symmetry.

\begin{description}
\item[Type 1:] An additional point $q$ is a \emph{type 1 extension of $p_i$} if $\rect(p_i,q)$ is sliced by $p_{i-1}$ and/or $p_{i+1}$, and by no other points, and the points $p_i,q$ form a copy of 21.
\item[Type 2:] Three additional points $q,r,s$ that are placed relative to $p_i$ form a \emph{type 2 extension of $p_i$} if:
\begin{enumerate}[(i)] 
\item $s$ lies either so that $p_{i-1},s$ forms a copy of 21 and the only pin slicing $\rect(p_{i-1},s)$ is $p_i$, or so that $s,p_{i+1}$ forms a copy of 21 and the only pin slicing $\rect(p_{i+1},s)$ is $p_{i+2}$;
\item $q$ and $r$ form a copy of 21 that is SW of $p_i$, NE of $\rect(p_1,\dots,p_{i-2})$ and is sliced only by $s$; and
\item $p_{i+1}$ separates $p_i$ from $q$ and $r$, and it is the only pin other than $s$ that slices $\rect(p_i,q,r)$. 
\end{enumerate}
\end{description}

\begin{figure}
\centering
\begin{tabular}{ccc}
\tikzstyle{background grid}=[draw, black!20,step=0.5]
\begin{tikzpicture}[scale=0.5,show background grid]
\plotpinsequence{3,1,3,2,4,1,3,2,4}
\foreach \x/\y in {4/3,4/4,5/4} {
\absdothollow{(\x+0.5,\y+0.5)}{};
}
\end{tikzpicture}
&\rule{10pt}{0pt}&
\tikzstyle{background grid}=[draw, black!20,step=0.5]
\begin{tikzpicture}[scale=0.5,show background grid]
\plotpinsequence{3,1,3,2,4,1,3,2,4}
\absdothollow{(4.25,2.75)}{}
\absdothollow{(4.75,2.25)}{}
\absdothollow{(-0.5,2.5)}{}
\absdothollow{(4.5,-3.5)}{}
\end{tikzpicture}\\
Type 1&&Type 2
\end{tabular}
\caption{Forming extended spirals. For Type 1, any one of the three hollow points may be added. For Type 2, the copy of 21 is added, together with one of the two slicing points.}
\label{fig:spiral-extensions}
\end{figure}
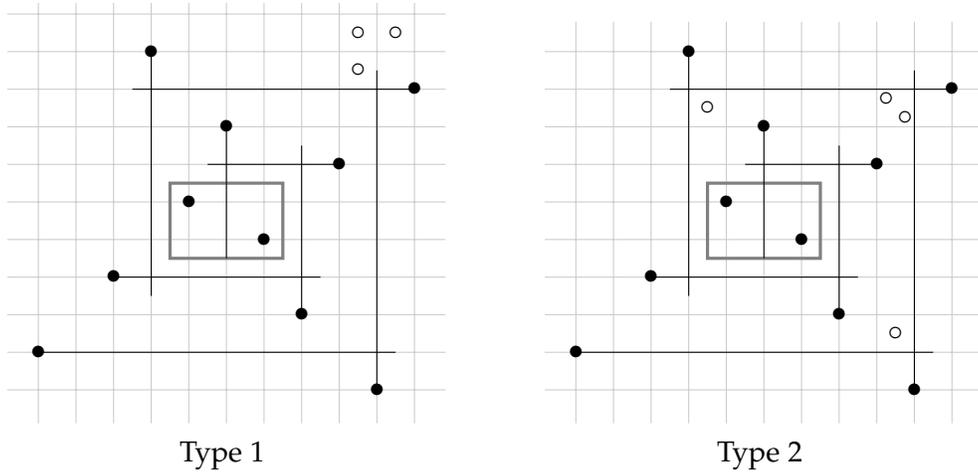

A \emph{spiral pin sequence with $k$ extensions} is a spiral pin sequence for which there exists $k$ \emph{distinct} pins to which extensions of either type have been added.  See Figure~\ref{fig:spiral-extensions} for illustrations of the possible extensions to spirals, and the lower right portion of Figure~\ref{fig-griddables} for two examples of a spiral pin sequence with several extensions. 

We observe that any spiral pin sequence with $k$ extensions is simple: starting from the fact that a spiral pin sequence is itself simple, the only possible intervals that could be created when the extensions are added can contain at most one point of the original pin sequence. Furthermore, every point belonging to a Type 1 extension is separated from any other point by at least one point belonging to the spiral, which prevents these points from being contained in a proper interval. For a Type 2 extension, the only exception to this is that the two points $q$ and $r$ (forming the copy of 21) are separated by the third point $s$, but $\rect(q,r,s)$ is sliced by at least two points from the original spiral.

\begin{lemma}
A spiral pin sequence with $2k$ extensions contains either $\oplus_k 21$ or $\ominus_k 12$.
\end{lemma}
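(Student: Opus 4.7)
The plan is to classify each extension by whether it contributes a copy of $21$ or of $12$, use pigeonhole to obtain $k$ extensions contributing the same pattern, and then use the outward-spiralling geometry to show these $k$ copies nest into a direct or skew sum.

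First, I would observe that in the canonical setup of the extension definition ($p_{i-1}$ an up pin, $p_i$ a right pin), both Type~1 and Type~2 extensions contribute an explicit copy of $21$. A spiral pin sequence (being turn-free) cycles through the four consecutive direction-pairs $(\text{up},\text{right})$, $(\text{right},\text{down})$, $(\text{down},\text{left})$, $(\text{left},\text{up})$, and the remaining three arise from the canonical pair by elements of the dihedral group: the $180^{\circ}$ rotation (for $(\text{down},\text{left})$) preserves the $21$-pattern, while the two $90^{\circ}$ rotations interchange $21$ and $12$. Hence exactly two of the four pin-direction pairs contribute copies of $21$ (the right and left pins in a spiral of the form L-U-R-D), and the other two contribute copies of $12$. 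Since the $2k$ extensions sit at distinct pins, pigeonhole guarantees at least $k$ of them contribute the same pattern; without loss of generality we have $k$ copies of $21$ from extensions at pins $p_{j_1}, \dots, p_{j_k}$ with $j_1 < \cdots < j_k$.

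Next, I would argue that these $k$ copies of $21$ assemble into $\oplus_k 21$. Maximality and separation in a turn-free pin sequence force outward expansion: successive right pins are strictly NE of one another, successive left pins are strictly SW of one another, and every right pin lies NE of every left pin; consequently the pins $p_{j_1}, \dots, p_{j_k}$ can be linearly ordered into a strictly NE chain. The central claim is that the $21$ contributed at $p_{j_\ell}$ lies strictly SW of the $21$ contributed at $p_{j_{\ell+1}}$. For a Type~2 extension this is essentially the definition: the $21$ pair $(q,r)$ lies in the bounded ``ring'' SW of $p_{j_\ell}$ and NE of $\rect(p_1,\dots,p_{j_\ell-2})$, and rings at distinct chain positions are disjoint by the outward expansion. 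For a Type~1 extension the extra point $q$ is constrained by maximality of $p_{j_\ell}$ in its direction together with the requirement that $\rect(p_{j_\ell},q)$ be sliced only by $p_{j_\ell-1}$ and/or $p_{j_\ell+1}$; a short case analysis shows these constraints force $q$ into a bounded neighbourhood of $p_{j_\ell}$ which is again disjoint from the contributions at the other chain pins. Chaining the resulting $k$ disjoint SW-to-NE-ordered $21$'s produces $\oplus_k 21$, and the symmetric case of $k$ copies of $12$ from up and down pins gives $\ominus_k 12$ by rotating the argument through $90^{\circ}$.

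The technical crux is the Type~1 case: unlike Type~2, where the $21$ is natively bounded by the rectangular skeleton of the spiral, here $q$ is only indirectly constrained through the slicing condition, and one has to verify carefully that maximality together with the restricted slicing prevents $q$ from reaching out far enough to overlap with contributions of later extensions on the chain. The Type~2 case and the pigeonhole step are essentially mechanical once the classification of patterns by symmetry is in hand.
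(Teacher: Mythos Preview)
Your approach is essentially the same as the paper's: classify each extension by the pattern it contributes (determined by which of the four symmetry-related direction pairs $p_{i-1},p_i$ falls into), apply pigeonhole to get $k$ contributions of the same type, and then use the outward growth of the spiral to show these $k$ copies nest into a sum.

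The difference lies in how the nesting step is organised, and here the paper's framing is cleaner than yours. You anchor each contributed $21$ to its pin $p_{j_\ell}$ and then argue that the ``bounded neighbourhoods'' around these pins are disjoint; as you yourself flag, this forces a case analysis for Type~1 extensions to control how far $q$ can stray from $p_{j_\ell}$. The paper instead anchors each contributed $21$ to the rectangle $\rect(p_1,\dots,p_{i-2})$: in the canonical orientation, both Type~1 and Type~2 extensions of $p_i$ yield a copy of $21$ lying NE of $\rect(p_1,\dots,p_{i-2})$ (and by symmetry SW of it for the $180^\circ$-rotated case). Since these rectangles are nested as $i$ increases, the relative positions of the $21$'s follow immediately, without needing to bound $q$ tightly in every direction. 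This sidesteps precisely the ``technical crux'' you identify: rather than proving the neighbourhoods are pairwise disjoint, one only needs each $21$ to lie on the correct side of the appropriate rectangle in the nested family.
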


\begin{proof}
For any of the three possible type 1 extensions $q$ to the pin $p_i$ in Figure~\ref{fig:spiral-extensions}, we see that $p_i,q$ forms a copy of 21 that is NE of $\rect(p_1,\dots,p_{i-2})$. Similarly, either of the type 2 extensions also provides a copy of 21 that is NE of the same rectangle, $\rect(p_1,\dots,p_{i-2})$. Similar arguments apply by symmetry to the other corners NW, SE and SW. Thus, from $2k$ extensions, by symmetry we can find $k$ that contribute a copy of 21. Furthermore, since any such copy of 21 arising from an extension of $p_i$ is NE or SW of $\rect(p_1,\dots,p_{i-2})$, we conclude that this collection of $k$ copies of 21 forms a copy of $\oplus_k 21$, as required.
\end{proof}

%
%
%
%
%
%
%
%
%
\section{Proof of Theorem~\ref{thm-main}}\label{sec-main}

Let $\pi$ be a permutation, and let $\R$ be an axes-parallel rectangle in the plot of $\pi$. A \emph{sliced copy of 21 that spans $\R$} is a copy of 21 whose rectangular hull is NE or SW of $\R$ and sliced by a point that is NW or SE of $\R$.

For our proof we will need the following observation:
\begin{observation}\label{part21}
For the permutation $\pi=\oplus_L 21$, the only non-empty intervals are singletons or the points in a contiguous sum of $21$s, i.e.\ intervals of the form $\{(i,\pi(i)): i\in [2k-1, 2\ell]\}$ for some $1 \leq k \leq \ell \leq L$.
\end{observation}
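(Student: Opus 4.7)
The plan is to reduce the claim to an elementary parity analysis on the endpoints of an interval. By definition, any interval of $\pi$ is a contiguous block of positions $I=[a,b]$ whose image $\pi(I)$ is also a contiguous set of values. If $a=b$ this yields a singleton, which is one of the allowed cases. Otherwise $b\ge a+1$, and the goal is to force $a$ to be odd and $b$ to be even.

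For the first half, I would suppose for contradiction that $a=2k$ is even. Then $\pi(a)=2k-1$ and $\pi(a+1)=2k+2$, so $\pi(I)$ contains both $2k-1$ and $2k+2$; but the only position where $\pi$ takes the value $2k$ is $2k-1=a-1$, which lies outside $I$. Hence $2k\notin\pi(I)$, contradicting contiguity of $\pi(I)$. A symmetric argument applied to $\pi(b)$ and $\pi(b-1)$ rules out $b$ being odd. This gives $I=[2k-1,2\ell]$ for some $1\le k\le\ell\le L$.

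To close the argument I would verify the converse, that every such set really is an interval: directly from the definition of $\oplus_L 21$, one has $\pi([2k-1,2\ell])=\{2k-1,2k,\dots,2\ell\}$, which is contiguous. There is no serious obstacle here — the result follows immediately from the rigid alternating structure of $\oplus_L 21$, and the only care needed is to pick the right pair of adjacent positions (namely $a,a+1$ and $b-1,b$) when deriving the two contradictions.
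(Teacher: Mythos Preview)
Your argument is correct. Note that the paper does not give a proof of this statement at all: it is labelled an Observation and is left to the reader, on the grounds that the block structure of $\oplus_L 21$ makes it immediate. Your parity analysis on the endpoints $a,b$ is exactly the sort of check one would carry out to verify this, and it is complete (the edge cases are covered since $b\ge a+1$ guarantees $a+1\le 2L$ and $b-1\ge 1$). One small remark: the ``symmetric argument'' for $b$ odd is really an analogous argument rather than a literal symmetry of $\oplus_L 21$, but the computation you indicate --- $\pi(b)=2\ell$, $\pi(b-1)=2\ell-3$, with the value $2\ell-1$ attained only at position $b+1\notin I$ --- goes through just as easily.
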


\begin{lemma}\label{lem-indec-slice}Let $\pi$ be a sum indecomposable permutation with $|\pi|>1$. Any line slicing $\pi$ must slice a copy of 21.\end{lemma}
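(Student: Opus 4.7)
The plan is a direct contrapositive argument: I will show that if a line $\ell$ slices $\pi$ (i.e.\ has points of $\pi$ on both sides) but slices no copy of $21$, then $\pi$ is sum decomposable, contradicting the hypothesis. There are two cases depending on whether $\ell$ is vertical or horizontal, and I would handle them in parallel.

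First I would treat a vertical line $\ell$. Let $L$ and $R$ denote the points of $\pi$ lying to the left and right of $\ell$, both nonempty by assumption. For any $p\in L$ and $q\in R$, the rectangular hull of $\{p,q\}$ is sliced (vertically) by $\ell$, so the hypothesis that $\ell$ slices no $21$ forces the pair $\{p,q\}$ to form a $12$ rather than a $21$. Since $p$ already lies to the left of $q$, this means $p$ lies below $q$. Thus every point of $L$ lies strictly below every point of $R$, and so $\pi=\pi|_L\oplus \pi|_R$, contradicting sum indecomposability.

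For a horizontal line $\ell$ the argument is completely symmetric: letting $D$ and $U$ denote the points of $\pi$ below and above $\ell$, the absence of a sliced $21$ forces every element of $D$ to lie strictly to the left of every element of $U$ (otherwise some pair straddling $\ell$ would form a $21$ with hull sliced horizontally by $\ell$), giving $\pi=\pi|_D\oplus\pi|_U$. Alternatively, since the inverse map $\pi\mapsto\pi^{-1}$ preserves sum indecomposability and converts horizontal lines into vertical ones, the horizontal case can simply be reduced to the vertical one.

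I do not anticipate any real obstacle. The lemma is essentially a geometric restatement of sum indecomposability, and the only thing to be careful about is spelling out the two orientations of the slicing line in line with the rectangle-slicing conventions from Section~\ref{preliminaries}.
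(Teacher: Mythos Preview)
Your proposal is correct and follows essentially the same contrapositive argument as the paper: if a slicing line misses every copy of $21$, then the points on one side all lie below-left of the points on the other side, so $\pi$ decomposes as a sum. The paper's proof is a one-sentence version of exactly this, without separating the vertical and horizontal cases explicitly.
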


\begin{proof}
If $\pi$ were sliced by a line not slicing a copy of 21, then $\pi$ would equal $\pi_1 \oplus \pi_2$ with $\pi_1$ the subpermutation of $\pi$ below (or left of) the line, and $\pi_2$ the subpermutation above (or right of) the line.
\end{proof}

\begin{lemma}\label{simple-slice}In a simple permutation $\sigma$, for any point $p$ of $\sigma$ such that the NE corner of $\sigma$ relative to $p$ contains 21, there exists a copy of 21 in the NE corner of $\sigma$ that is sliced by a point in the NW or SE corner.\end{lemma}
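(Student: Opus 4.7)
The plan is to argue by contradiction. Suppose no copy of $21$ in the NE corner of $\sigma$ relative to $p$ is sliced by a point of the NW or SE corner. Let $Q$ denote the set of points of $\sigma$ in the NE corner, so the sub-permutation $\tau = \sigma|_Q$ contains a $21$. Form the maximal sum decomposition $\tau = \tau_1 \oplus \cdots \oplus \tau_k$. Since later summands in a direct sum lie strictly above and to the right of earlier ones, any occurrence of $21$ in $\tau$ must be contained in a single summand. Pick $\tau_i$ such that $\tau_i$ contains a $21$; then $\tau_i$ is sum indecomposable with $|\tau_i| \geq 2$.

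Let $R$ be the rectangular hull of the points of $\tau_i$, regarded as a rectangle in the plot of $\sigma$. The claim is that $R$ is unsliced in $\sigma$. This will contradict simplicity, since the points in $R$ form an interval of size at least $2$ that does not include $p$, so it is neither a singleton nor all of $\sigma$. To justify the claim, note that $R$ lies strictly NE of $p$, so I can case-analyse potential slicers by the corner they belong to: points of the other summands $\tau_j$ lie entirely above-right or below-left of $R$ (by the sum decomposition of $\tau$) and hence cannot slice $R$; the point $p$ and all SW points lie below-and-left of $R$ and also cannot slice $R$; so only NW points (via horizontal slicing) or SE points (via vertical slicing) could possibly slice $R$.

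To rule out NW and SE slicers, I apply Lemma~\ref{lem-indec-slice} to $\tau_i$: since $\tau_i$ is sum indecomposable with $|\tau_i| > 1$, any line slicing it must slice some copy of $21$ inside $\tau_i$. But every such $21$ lies in $Q$, hence in the NE corner relative to $p$, so by our standing assumption it is not sliced by any NW or SE point. Therefore $R$ has no NW or SE slicer, establishing the claim and completing the proof. The main subtlety is isolating the right sub-permutation to which Lemma~\ref{lem-indec-slice} applies: it must simultaneously contain a $21$ and be sum indecomposable, which is precisely why passing to a summand of the sum decomposition of $\tau$ is essential rather than working with $\tau$ directly.
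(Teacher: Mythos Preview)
Your proof is correct and follows essentially the same approach as the paper's. Both arguments isolate a non-trivial sum-indecomposable component of $\sigma|_{NE}$, use simplicity of $\sigma$ to force a slicing point (necessarily from the NW or SE corner), and then invoke Lemma~\ref{lem-indec-slice} to produce the sliced copy of $21$; you simply frame this as a contradiction and spell out more of the case analysis on potential slicers, whereas the paper proceeds directly.
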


\begin{proof}
Consider $\sigma|_{NE}$, the restriction of $\sigma$ to the region NE of $p$. Since it contains a copy of 21, it is either itself a non-trivial sum indecomposable permutation, or it contains a non-trivial sum indecomposable component. This component must be sliced in $\sigma$ by some other point, which necessarily lies in the NW or SE corner of $\sigma$ relative to $p$, and the result follows by Lemma~\ref{lem-indec-slice}.
\end{proof}

\begin{lemma}\label{lem-sub-simple}
Let $\pi$ be a simple permutation, and let $L,m$ be positive integers. If $\R$ is an axes-parallel rectangle in the plot of $\pi$ which contains a sum of $L(8m^2)^{8m^2}$ copies of $21$, then either $\pi$ contains a sawtooth alternation of length $3m$, or there is a sum of $L$ copies of 21 contained in a simple permutation within $\pi|_\R$. 
\end{lemma}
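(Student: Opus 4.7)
The plan is to iteratively refine the rectangle $\R$ using the substitution decomposition tree of $\pi|_\R$: at each step, either we descend into a heavy block (losing at most a factor of $8m^2$ in the count of 21s) or the copies of 21 have spread sufficiently across the blocks that we can extract a sawtooth alternation via pigeonhole plus Erd\H{o}s--Szekeres. The bound $N = L(8m^2)^{8m^2}$ is calibrated to permit up to $8m^2$ levels of concentration refinement while retaining at least $L$ copies of 21 throughout.

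At iteration $t$ we maintain a sub-rectangle $\R_t \subseteq \R$ with $\oplus_{N_t} 21 \subseteq \pi|_{\R_t}$. If $\pi|_{\R_t}$ is simple, we are done: take it as our simple sub-permutation of $\pi|_\R$. Otherwise consider the substitution decomposition $\pi|_{\R_t} = \sigma[\pi_1, \ldots, \pi_k]$, further refining into sum- or skew-indecomposable components when $|\sigma|=2$. By Observation~\ref{part21} the points of $\oplus_{N_t} 21$ distribute so that each block contains a contiguous range of $\oplus_{N_t} 21$-indices. A direct pigeonhole on the $2N_t$ points across blocks yields the dichotomy: either (i) some block $\pi_i$ contains at least $N_t/(8m^2)$ copies of 21, or (ii) at least $8m^2$ distinct blocks each contain some point of $\oplus_{N_t} 21$. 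In case (i) we refine by setting $\R_{t+1} = \rect(\pi_i)$; since $\pi_i$ is an interval of $\pi|_{\R_t}$, we have $\pi|_{\R_{t+1}} = \pi_i$, a strictly smaller sub-permutation, and we continue.

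In case (ii) we build a sawtooth. Since $\pi$ is simple and $\pi|_{\R_t}$ is a proper sub-permutation of $\pi$, each non-trivial block of $\pi|_{\R_t}$ is an interval of $\pi|_{\R_t}$ but not of $\pi$, and so must be sliced by a point of $\pi$ lying outside $\R_t$; a similar argument applies to the 2-point straddling pair between consecutive singleton blocks when needed. Pigeonholing the $\geq 8m^2$ external slicers across the four sides of $\R_t$ leaves at least $2m^2$ on a single side; since substitution blocks have disjoint value ranges, these slicers sit at distinct heights corresponding to distinct blocks. Applying Erd\H{o}s--Szekeres to their remaining coordinate yields a monotone sub-sequence of length $m$, and pairing each chosen slicer with a copy of 21 inside its block produces a sawtooth alternation of length $3m$ in $\pi$: parallel if the monotone sub-sequence is increasing, wedge if decreasing.

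The main obstacle is the careful execution of case (ii): we must verify that each selected external slicer actually slices an actual copy of 21 (not merely the enclosing block's rectangular hull), especially when the block is small or is a singleton participating only in a straddling 21. The resolution is to select the representative copy of 21 inside each block so that the slicer's relevant coordinate lies between the 21's values or positions; for straddling-only contributions one works with the 2-point straddling pair itself as the relevant non-trivial interval. The iteration depth of $8m^2$ can be justified either by tracking the budget loss per refinement, or cleanly by selecting a priori the deepest node of the substitution tree whose sub-permutation retains at least $L(8m^2)^{8m^2 - t}$ copies of 21 at depth $t$, at which node case (ii) is forced by the counting argument.
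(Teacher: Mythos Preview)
Your overall strategy—iteratively refining via the substitution decomposition, with a dichotomy between ``concentration'' (descend into a heavy block) and ``spread'' (extract a sawtooth)—matches the paper's. However, your case (ii) has a genuine gap.

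When $\pi|_{\R_t}$ is neither sum- nor skew-decomposable, so that the skeleton $\sigma$ is simple of length at least $4$, it can happen that many of the blocks meeting $\Sigma$ meet it in \emph{exactly one} point. Such blocks may themselves be singletons, in which case they admit no external slicer; and your fallback of treating the straddling 21-pair as ``the relevant non-trivial interval'' does not work, because a copy of 21 whose two points lie in distinct blocks $\tau_j,\tau_{j'}$ is \emph{not} an interval of $\pi|_{\R_t}$. Since $\sigma$ is simple, some third block separates $\tau_j$ from $\tau_{j'}$, so the straddling pair is sliced \emph{inside} $\R_t$, not outside. Your sawtooth construction therefore collapses in this sub-case. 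The paper handles this differently: when at least $2L$ blocks meet $\Sigma$ in exactly one point, these correspond to $\oplus_L 21$ inside $\sigma$ itself, and $\sigma$ is taken as the required simple permutation within $\pi|_\R$. You never identify the skeleton as a candidate for the simple conclusion—only $\pi|_{\R_t}$—and your threshold $8m^2$ makes no reference to $L$, so even adding this observation would require rebalancing the cases.

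There is a related termination problem. The paper does not merely refine in case (i): at each refinement it also records a sliced copy of 21 lying NE or SW of the new rectangle (its condition (iii)), sliced either by a point outside $\R_0$ or by a point in an opposite corner within the current rectangle. After $8m^2$ refinements these recorded triples themselves yield a sawtooth (wedge, directly from the nesting, or via Erd\H{o}s--Szekeres on the external slicers). Your iteration has no such side-product, so if case (i) holds at every level you descend indefinitely and may reach a simple leaf with fewer than $L$ copies remaining. Your ``deepest node'' remark does not repair this: along the greedy path the invariant $N_t\ge L(8m^2)^{8m^2-t}$ is maintained at \emph{every} depth, so the deepest such node is simply the leaf, whose depth need not be bounded by $8m^2$.
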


\begin{proof}
Starting with $\R_0=\R$ and $\Sigma_0$ a set of points in $\R$ forming a sum of $L(8m^2)^{8m^2}$ copies of $21$, we will construct a sequence of rectangles $\R_0\supsetneq \R_1 \supsetneq \cdots \supsetneq \R_k$ with $k\leq 8m^2$ with the following properties, for $i\geq 1$.
\begin{enumerate}
\item[(i)] Each $\R_i$ contains a set of points $\Sigma_i\subsetneq\Sigma_{i-1}$ that forms a sum of at least $L(8m^2)^{8m^2-i}$ copies of $21$;
\item[(ii)] the subpermutation $\pi|_{\R_i}$ is an interval inside $\pi|_{\R_{i-1}}$ (and thus by induction is also an interval inside $\pi|_{\R_0}$). 
\item[(iii)] In $\pi|_{\R_{i-1}}$, there exists a copy of 21 that is NE or SW of $\R_i$, and either forms a sliced copy of 21 spanning $\R_i$, or is sliced by a point outside $\R_0$.
\end{enumerate}

Our construction of rectangles will terminate at $\R_k$ for some $k < 8m^2$ if in $\pi|_{\R_k}$ we can find a copy of a simple permutation that contains a sum of (at least) $L$ copies of $21$ or a sawtooth alternation of length $3m$. 

Otherwise, our construction terminates when $k = 8m^2$, at which point condition (iii) will guarantee that we have a sum of at least $8m^2$ copies of 21 inside $\R_0$ (one for each rectangle). Each copy of 21 is either sliced by a point outside $\R_0$, or it forms a sliced copy of 21 that spans the next rectangle $\R_i$ in the sequence. 

If there are $4m^2$ copies of 21 that are sliced by points outside $\R_0$, then we can find $m^2$ pairs that are sliced on the same side of $\R_0$, and we can apply the Erd\H{o}s--Szekeres Theorem~\cite{erdos:a-combinatorial:} to find a monotone sequence of $m$ points outside $\R_0$ slicing copies of 21, giving a sawtooth alternation of length $3m$.

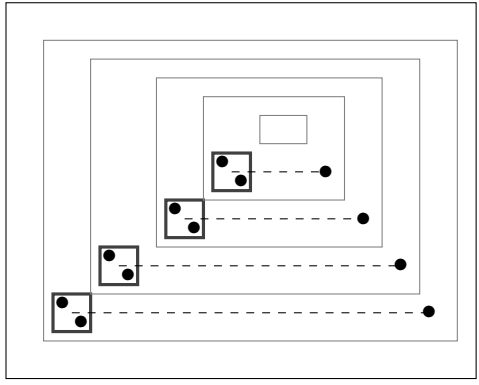
\begin{figure}
\centering
\begin{tikzpicture}[scale=0.25]
\draw (0,0) rectangle (25,20);
\foreach \x/\y/\z in {2/2/19,4.5/4.5/15,8/7/9.5,10.5/9.5/5} {
	\begin{scope}[shift={(\x,\y)}]
	\plotpermborder{2,1}
	\draw[dashed] (1.5,1.5) -- (1.5+\z,1.5);
	\absdot{(1.5+\z,1.5)}{}
	\end{scope}
}
\draw[thin, gray] (2,2) rectangle (24,18);
\draw[thin, gray] (4.5,4.5) rectangle (22,17);
\draw[thin, gray] (8,7) rectangle (20,16);
\draw[thin, gray] (10.5,9.5) rectangle (18,15);
\draw[thin, gray] (13.5,12.5) rectangle (16,14);
\end{tikzpicture}
\caption{A sequence of $m$ sliced copies of 21 that span nested rectangles in the same direction immediately yields a wedge sawtooth alternation of length $3m$.}
\label{fig-triples}
\end{figure}

On the other hand, if there are $4m^2$ copies of 21 sliced inside $\R_0$, then we can find $m^2$ copies of 21 that are sliced by points in the same way (i.e., one of the 21 lying NE or SW of the next rectangle, with the slicing point being NW or SE). Because the sequence of rectangles $\R_0,\R_1,\dots$ are nested, the slicing points already form a monotone sequence (see Figure~\ref{fig-triples}), which means that we have in fact found a wedge sawtooth alternation of length $3m^2$ (and hence one of length $3m$).

Thus, it now suffices to describe the process to construct $\R_{i+1}$ from $\R_i$ to satisfy (i)--(iii) above. Consider the substitution decomposition of $\pi|_{\R_i}$. First, if $\pi|_{\R_i}$ is skew decomposable, then one of the skew indecomposable components must contain all of $\Sigma_i$, so we could restrict $\pi|_{\R_i}$ to this single component, with (i)--(iii) still being satisfied. Thus, without loss of generality, we can assume that $\pi|_{\R_i}$ is skew indecomposable.

Next, suppose that $\pi|_{\R_i}$ is sum decomposable. If $\Sigma_i$ is contained entirely inside a single sum indecomposable component of $\pi|_{\R_i}$, then we may replace $\R_i$ with the rectangular hull of this component, in which case $\pi|_{\R_i}$ is no longer sum decomposable and we have a different case. Otherwise, $\Sigma_i$ is distributed across at least two of the sum indecomposable components of $\pi|_{\R_i}$, and note that each such component $\tau$ intersects $\Sigma_i$ in a whole number of 21s. 

Since $\pi$ is simple, every nonsingleton sum indecomposable component of $\pi|_{\R_i}$ must be sliced by a point outside $\R_i$ (and hence outside $\R_0$), and by Lemma~\ref{lem-indec-slice} this slice must also slice a copy of $21$ inside the component. Thus, if we have at least $4m^2$ nonsingleton components in $\pi|_{\R_i}$, we can find a sum of $4m^2$ copies of 21 sliced by points outside $\R_0$. Since there are four sides of $\R_0$, $m^2$ of these components have their slicing points on the same side of $\R_0$, and the Erd\H{o}s--Szekeres Theorem then yields a wedge or parallel sawtooth alternation of length $3m$. See Figure~\ref{fig-sum-dec}(a).

\begin{figure}
\centering
\begin{tabular}{ccc}
\begin{tikzpicture}[scale=0.25]
\draw (-5,-2) rectangle (23,19);
\foreach \x/\y in {14/12,9/8,7/6,5/4,0/0} {
 \begin{scope}[shift={(\x,\y)}]
  \plotperm{2,1}
  \draw[gray] (0.5,0.5) rectangle (2.5,2.5);
 \end{scope}
}
\foreach \x/\y in {11.5/10,2.5/2} {
 \begin{scope}[shift={(\x,\y)}]
  \plotperm{2,1}
  \draw[gray] (0,0.5) rectangle (3,2.5);
 \end{scope}
}
\draw[thick] (0.5,0.5) rectangle (17.5,15.5);
\foreach \a/\b/\c/\d in {1/1.5/21/1.5,4/4/4/-1,6/5.5/22/5.5,
			9/7.5/-2/7.5,10/9.5/20/9.5,13/11/13/17,15/13.5/19/13.5} {
\draw[dashed] (\a,\b) -- (\c,\d);
\absdot{(\c,\d)}{}
}
\node at (1.6,14.4) {$\R_i$};
\node at (-4,18) {$\pi$};
\end{tikzpicture}
&\rule{10pt}{0pt}&
\begin{tikzpicture}[scale=0.25]
\draw (-5,-2) rectangle (23,19);
\begin{scope}[shift={(2,2)}]
\plotpermborder{2,1,4,3,6,5,8,7}
\end{scope}
\begin{scope}[shift={(13,12)}]
\plotpermborder{2,1}
\end{scope}
\draw[thick] (0.5,0.5) rectangle (17.5,15.5);
\draw[thin, gray] (0.5,0.5) rectangle (2.5,2.5);
\draw[thin, gray] (10.5,10.5) rectangle (13.5,12.5);
\draw[thin, gray] (15.5,14.5) rectangle (17.5,15.5);
\draw[dashed] (14.5,-1) -- (14.5,18);
\draw[dashed] (-2,13.5) -- (21,13.5);
\absdothollow{(14.5,-1)}{}
\absdothollow{(14.5,18)}{}
\absdothollow{(-2,13.5)}{}
\absdothollow{(21,13.5)}{}
\node at (1.6,14.4) {$\R_i$};
\node at (3.5,9.5) {$\tau$};
\node at (13,15) {$\tau'$};
\node at (-4,18) {$\pi$};
\end{tikzpicture}
\\
(a)&&(b)
\end{tabular}
\caption{The two scenarios when $\pi|_{\R_i}$ is sum decomposable: (a) at least $4m^2$ nonsingleton sum components; (b) one component ($\tau$) contains a sum of at least $L(8m^2)^{8m^2-(i+1)}$ copies of $21$ and another ($\tau'$) contains at least one.}
\label{fig-sum-dec}
\end{figure}
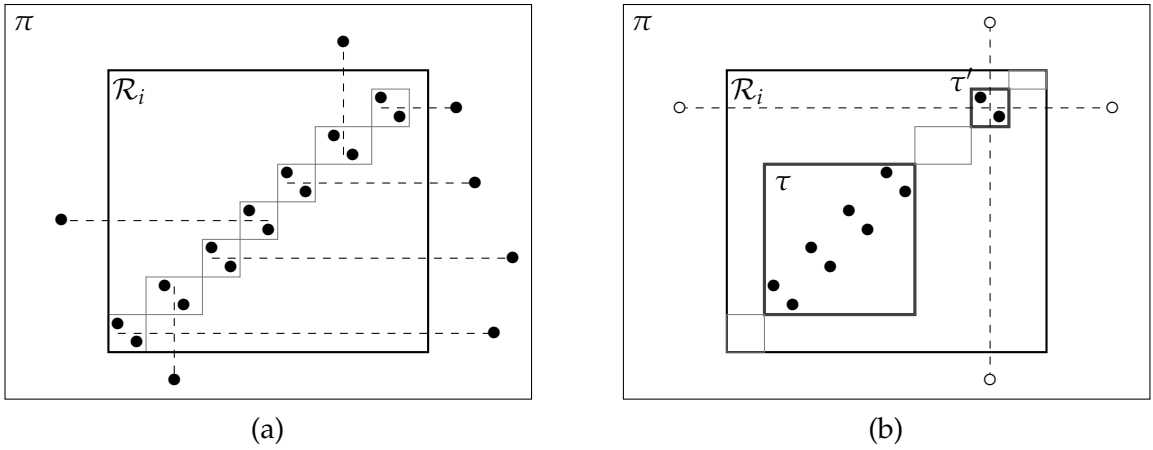

Now we may suppose that we have fewer than $4m^2$ nonsingleton sum indecomposable components in $\pi|_{\R_i}$. By the pigeonhole principle, this means that there is some component $\tau$ which contains at least $L(8m^2)^{8m^2-i}/(4m^2) > L(8m^2)^{8m^2-(i+1)}$ copies of 21. We also know (by assumption) that there exists some other nonsingleton component $\tau'$, which must be sliced by some point outside $\R_0$, see Figure~\ref{fig-sum-dec}(b). We set $\R_{i+1}$ to be the rectangular hull of $\tau$, and any other nonsingleton component $\tau'$ provides the copy of 21 sliced by a point outside $\R_0$.

We now turn to the case where $\pi|_{\R_i}$ is sum and skew indecomposable. In this case, consider the substitution decomposition of $\pi|_{\R_i}$, and write $\pi|_{\R_i} = \sigma[\tau_1,\dots,\tau_{|\sigma|}]$. By Observation~\ref{part21}, each interval $\tau_j$ may contain 0, 1, or a multiple of 2 points from $\Sigma_i$. First, if $\Sigma_i\subset \tau_j$ for some $j$, then we may replace $\R_i$ with the rectangular hull of $\tau_j$, and consider this instead. 

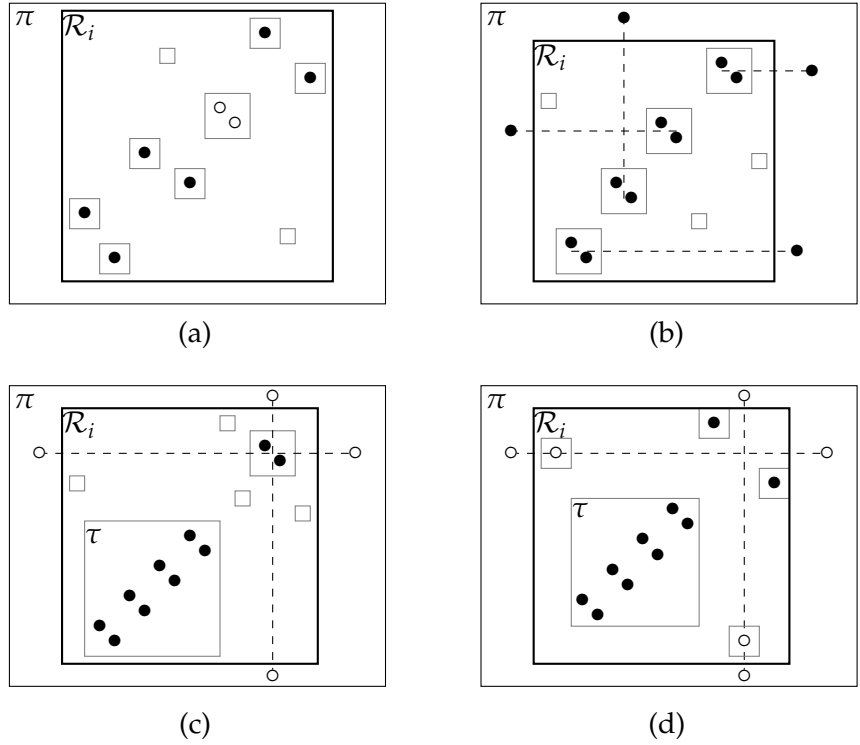
\begin{figure}
\centering
\begin{tabular}{ccc}
\begin{tikzpicture}[scale=0.2]
\draw (-4,-2) rectangle (21,18);
\foreach \x/\y in {0/3,2/0,4/7,7/5,12/15,15/12} {
 \begin{scope}[shift={(\x,\y)}]
  \plotperm{1}
  \draw[gray] (0,0) rectangle (2,2);
 \end{scope}
}
\foreach \x/\y in {5.5/13.5,13.5/1.5} {
 \begin{scope}[shift={(\x,\y)}]
  \draw[gray] (0.5,0.5) rectangle (1.5,1.5);
 \end{scope}
}
\absdothollow{(11,10)}{}
\absdothollow{(10,11)}{}
\draw[gray] (9,9) rectangle (12,12);
\draw[thick] (-0.5,-0.5) rectangle (17.5,17.5);
\node at (0.6,16.4) {$\R_i$};
\node at (-3,17) {$\pi$};
\end{tikzpicture}
&\rule{10pt}{0pt}&
\begin{tikzpicture}[scale=0.2]
\draw (-4,-2) rectangle (21,18);
\foreach \x/\y in {1/0,4/4,7/8,11/12} {
 \begin{scope}[shift={(\x,\y)}]
  \plotperm{2,1}
  \draw[gray] (0,0) rectangle (3,3);
 \end{scope}
}
\foreach \x/\y in {0/11,10/3,14/7} {
 \begin{scope}[shift={(\x,\y)}]
  \draw[gray] (0,0) rectangle (1,1);
 \end{scope}
}
\draw[thick] (-0.5,-0.5) rectangle (15.5,15.5);
\node at (0.6,14.4) {$\R_i$};
\node at (-3,17) {$\pi$};
\foreach \x/\y/\z/\w in {2/1.5/17/1.5, 5.5/5/5.5/17,
					9/9.5/-2/9.5,12/13.5/18/13.5} {
	\draw[dashed] (\x,\y) -- (\z,\w);
	\absdot{(\z,\w)}{}
}
\end{tikzpicture}

\\
(a)&&(b)\\[10pt]
\begin{tikzpicture}[scale=0.2]
\draw (-4,-2) rectangle (21,18);
\begin{scope}[shift={(12,12)}]
  \plotperm{2,1}
  \draw[gray] (0,0) rectangle (3,3);
\end{scope}
\foreach \x/\y in {0/11,10/15,11/10,15/9} {
 \begin{scope}[shift={(\x,\y)}]
  \draw[gray] (0,0) rectangle (1,1);
 \end{scope}
}
\begin{scope}[shift={(1,0)}]
	\plotperm{2,1,4,3,6,5,8,7}
	\draw[gray] (0,0) rectangle (9,9);
	\node at (0.6,8) {$\tau$};
\end{scope}
\draw[thick] (-0.5,-0.5) rectangle (16.5,16.5);
\node at (0.6,15.4) {$\R_i$};
\node at (-3,17) {$\pi$};
\draw[dashed] (-2,13.5) -- (19,13.5);
\draw[dashed] (13.5,-1.3) -- (13.5,17.3);
\absdothollow{(-2,13.5)}{}
\absdothollow{(19,13.5)}{}
\absdothollow{(13.5,-1.3)}{}
\absdothollow{(13.5,17.3)}{}
\end{tikzpicture}
&\rule{10pt}{0pt}&
\begin{tikzpicture}[scale=0.2]
\draw (-4,-2) rectangle (21,18);
\foreach \x/\y in {10.5/14.5,14.5/10.5} {
 \begin{scope}[shift={(\x,\y)}]
  \absdot{(1,1)}{}
  \draw[gray] (0,0) rectangle (2,2);
\end{scope}
}
\begin{scope}[shift={(1.75,1.75)}]
	\plotperm{2,1,4,3,6,5,8,7}
	\draw[gray] (0.25,0.25) rectangle (8.75,8.75);
	\node at (0.85,8) {$\tau$};
\end{scope}
\draw[thick] (-0.5,-0.5) rectangle (16.5,16.5);
\node at (0.6,15.4) {$\R_i$};
\node at (-3,17) {$\pi$};
\draw[dashed] (-2,13.5) -- (19,13.5);
\draw[dashed] (13.5,-1.3) -- (13.5,17.3);
\absdothollow{(-2,13.5)}{}
\absdothollow{(19,13.5)}{}
\absdothollow{(13.5,-1.3)}{}
\absdothollow{(13.5,17.3)}{}
\foreach \x/\y in {0/12.5,12.5/0} {
 \begin{scope}[shift={(\x,\y)}]
  \absdothollow{(1,1)}{}
  \draw[gray] (0,0) rectangle (2,2);
 \end{scope}
}

\end{tikzpicture}
\\
(c)&&(d)
\end{tabular}
\caption{The scenarios when $\pi|_{\R_i}$ is sum and skew indecomposable: (a) $\pi_{\R_i}$ contains $2L$ intervals intersecting $\Sigma_i$ in 1 point; (b) At least $4m^2$ intervals intersect $\Sigma_i$ in two or more points; (c) $\tau$ contains many copies of 21 from $\Sigma_i$ and another interval $\tau'$ contains at least 1; (d) $\tau$ contains many copies of 21, and another copy of 21 is split across two other intervals.}
\label{fig-sigma-dec}
\end{figure}

If for some copy of 21 in $\Sigma_i$ one of the points is a singleton inside some interval $\tau_j$, then there exists another interval that contains the other point as a singleton. Thus, if we can find at least $2L$ intervals of $\pi|_{\R_i}$ each of which intersects $\Sigma_i$ in exactly one point, then $\sigma$ contains a sum of at least $L$ copies of 21, see Figure~\ref{fig-sigma-dec}(a). 
On the other hand, if we can find at least $4m^2$ intervals each of which intersects $\Sigma_i$ in (at least) two points, then each of these intervals must contain a copy of 21 that is sliced by a point outside $\R_0$, and by the earlier argument we can find a sawtooth alternation of length $3m$. See Figure~\ref{fig-sigma-dec}(b).

We may now suppose that fewer than $2L$ intervals intersect in exactly one point (so there are at most $L$ copies of 21 for which this occurs), and fewer than $4m^2$ intersect in two or more points. Since $\Sigma_i$ comprises at least $L(8m^2)^{8m^2-i}$ copies of 21, there are at least $L\left((8m^2)^{8m^2-i}-1\right)$ copies of 21 in which the `2' and `1' of each pair lie in the same interval. By the pigeonhole principle, there exists an interval $\tau$ that contains at least $L\left((8m^2)^{8m^2-i}-1\right)/(4m^2) > L(8m^2)^{8m^2-(i+1)}$ copies of 21 from $\Sigma_i$. We set $\R_{i+1}$ to be the rectangular hull of $\tau$.

Finally, since $\tau$ is not the only interval containing copies of 21 from $\Sigma_i$, we can now either find another interval $\tau'$ that contains a whole number of copies of 21 from $\Sigma_i$, or two intervals that together contain a copy of $21$ from $\Sigma_i$. In the first case (illustrated in Figure~\ref{fig-sigma-dec}(c)), the interval $\tau'$ must contain a copy of 21 that is sliced by a point outside $\R_0$ as required. In the second case (illustrated in Figure~\ref{fig-sigma-dec}(d)), the copy of 21 must be sliced either outside $\R_0$, or by Lemma~\ref{simple-slice} we can find a copy of 21 in the same region of $\R_i$ relative to $\R_{i+1}$ as the original 21 (NE or SW) which is sliced by a point either NW or SE of $\R_{i+1}$ and within $\R_i$. This gives us the necessary sliced copy of 21 that spans $\R_{i+1}$ for condition (iii).

This completes the possible cases for the substitution decomposition of $\pi|_{\R_i}$, and hence the proof. Note that since $\pi$ is a finite permutation, the number of times that we may replace the rectangle $\R_i$ with a smaller rectangle (e.g., when $\pi|_{\R_i}$ is skew decomposable) is bounded. 
\end{proof}

Given a simple permutation $\pi$, let $\rho(\pi)$ denote the sum of the lengths of the maximal sawtooth alternations in $\pi$ of each of the eight types depicted in Figure~\ref{fig-sawtooth}. Our proof of Theorem~\ref{thm-main} will be complete after we have proved the following lemma.

\begin{lemma}\label{lem-big-induction}
For every $m,s \in \mathbb{N}$, there is a function $g(m,s)$ such that every simple permutation $\pi$ that contains a sum of $g(m,s)$ copies of 21 must either contain an increasing oscillation of length $m$ or satisfy $\rho(\pi) \geq 3s$. \end{lemma}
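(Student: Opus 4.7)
The plan is to observe that in a simple permutation, each summand of a long $\oplus$-sum of 21s must be externally sliced, and then to apply the Erd\H{o}s--Szekeres theorem to extract a monotone sequence of such slicers, producing a long sawtooth alternation. Along this route, $g(m,s)$ can be taken to depend on $s$ alone, and the oscillation alternative in the conclusion is not used.

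Set $g(m,s) = 4s^2$ (with the $s = 0$ case being trivial). Let $\pi$ be a simple permutation containing $\oplus_{4s^2} 21$, and for $1 \leq i \leq 4s^2$ let $R_i$ denote the rectangular hull of the $i$th summand in $\oplus$-order. The $\oplus$-ordering places the $j$th summand (for $j \neq i$) strictly in the NE or SW corner region of $R_i$, so the hulls have pairwise disjoint $x$- and $y$-ranges, and in particular no point of the $\oplus$-sum can slice any $R_i$ (slicers must lie in side regions, not corners).

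The first step is to show that each $R_i$ is sliced by some point of $\pi \setminus \oplus_{4s^2} 21$. Suppose not: then by the geometric characterisation of intervals the set of points of $\pi$ lying inside $R_i$ forms an interval of $\pi$. Since $R_i$ contains the two points of the $i$th summand and $|\pi| \geq 3$, simplicity of $\pi$ forces this interval to be all of $\pi$, so $\pi \subseteq R_i$. But the other summands (which exist because $4s^2 \geq 2$) lie in corner regions strictly outside $R_i$, a contradiction.

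Since the $R_i$'s $x$- and $y$-ranges are pairwise disjoint, each such external slicer slices exactly one $R_i$. Pigeonholing over the four side directions yields $s^2$ summands sliced from a common side, say from above. Indexing the corresponding slicers by the $\oplus$-position of the summand each slices and applying the Erd\H{o}s--Szekeres theorem~\cite{erdos:a-combinatorial:} gives a monotone subsequence of length $s$; together with the corresponding $s$ summands, this forms either a parallel sawtooth alternation (if the subsequence is increasing) or a wedge sawtooth alternation (if decreasing) of length $3s$, so $\rho(\pi) \geq 3s$. The principal technical subtlety is the interval-theoretic exclusion of unsliced $R_i$, which relies on both the simplicity of $\pi$ and the existence of at least two $\oplus$-summands to place points outside $R_i$.
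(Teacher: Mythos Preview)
Your final step does not produce a sawtooth alternation. After pigeonholing you have $s^2$ indices $i$ whose chosen slicer $q_i$ lies, say, above $R_i$; but ``above $R_i$'' only constrains $q_i$ relative to that one summand. In a sawtooth alternation the monotone points must lie entirely to one side of the \emph{whole} sum of 21s (see Subsection~\ref{subsec-structures} and Figure~\ref{fig-sawtooth}), whereas here $q_i$ may well have $y$-coordinate below the next selected $R_{i'}$, so the $q_i$'s can interleave with the 21s rather than sit alongside them. Erd\H{o}s--Szekeres gives you a monotone subsequence of slicers, but monotonicity alone does not force them into the required side region.

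Increasing oscillations show the gap is fatal and that the bound $g(m,s)=4s^2$ is false. An increasing oscillation of length $N$ is simple, contains $\oplus_k 21$ for $k$ linear in $N$, and (being a merge of two increasing sequences) avoids $321$. On the other hand, every sawtooth alternation of length at least $6$, of any of the eight types, contains $321$: for instance, with two copies of $21$ and the monotone points below, the first $21$ together with the second slicer already gives a $321$, and the other orientations are similar. Hence $\rho(\pi)\le 8\cdot 3=24$ for every increasing oscillation, so for $s\ge 9$ and $N$ large you obtain a simple $\pi$ containing $\oplus_{4s^2}21$ with $\rho(\pi)<3s$. The lemma's conclusion still holds for such $\pi$, but only via the oscillation alternative that you discarded; this alternative is therefore essential, and one genuinely needs the machinery of Lemma~\ref{lem-sub-simple} together with a right-reaching pin sequence, as in the paper, to separate the two outcomes.
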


\begin{proof}[Proof of Theorem~\ref{thm-main}, given Lemma~\ref{lem-big-induction}]
We set $f(n) = g(n,8n)$ where $g$ is the function from Lemma~\ref{lem-big-induction}. Thus, any $\pi$ that contains a sum of at least $g(n,8n)$ copies of 21 must contain an increasing oscillation of length $n$, or have $\rho(\pi)\geq 24n$. Since $\rho(\pi)$ is the sum of the sizes of the eight different maximal sawtooth alternations that can be found in $\pi$, one of these sawtooth alternations has length at least $3n$.
\end{proof}

\begin{proof}[Proof of Lemma~\ref{lem-big-induction}]
First, for $m=1,2,3$, and any $s \in \mathbb{N}$, we can set $g(m,s)=2$ (since any simple permutation of length at least four contains an increasing oscillation of length 3). 
Additionally, for \emph{any} fixed $m \geq 4$, it is not hard to see that we may take $g(m,1)=2$ (since any simple permutation of length at least four contains a sawtooth alternation of length three). Thus, we may now assume that $m \geq 4$ and $s > 1$, and we will show that we may take $g(m,s) = (m+3)\cdot k+1$, where $k= (8s^2)^{8s^2} g(m, s-1)$ by induction on $s$.

We start with a simple permutation $\pi$ which contains a sum of at least $(m+3)k+1$ copies of 21, and denote the points in some longest such sum by $\Sigma$. We now partition $\Sigma$ into $m+4$ disjoint rectangles, $\R_0$ through $\R_{m+3}$, where $\R_0$ is the bounding rectangle for the first copy of $21$ in $\Sigma$, and, for $1 \leq i \leq m+3$, $\R_{i}$ is the bounding rectangle for the $k$ least copies of $21$ in $\Sigma$ not contained in any previous $\R_j$. 

If any rectangle $\R_i$ contains a sawtooth alternation of length $3s$, then we are done. Therefore, we may assume that no rectangle contains such a sawtooth alternation, and so in each $\pi|_{\R_i}$ we can find a simple permutation $\sigma_i$ with $g(m,s-1)$ copies of 21 by Lemma~\ref{lem-sub-simple}. For $i=1,\dots, m+3$, let $\S_i=\rect(\sigma_i)$. If any $\sigma_i$ contained an increasing oscillation of length $m$, then we are done, thus by the inductive hypothesis we can assume that $\rho(\sigma_i) = 3s-3$ for $i=1,\dots,m+3$ (since $\rho(\sigma_i) \geq 3s-3$ and if strictly greater we are done).

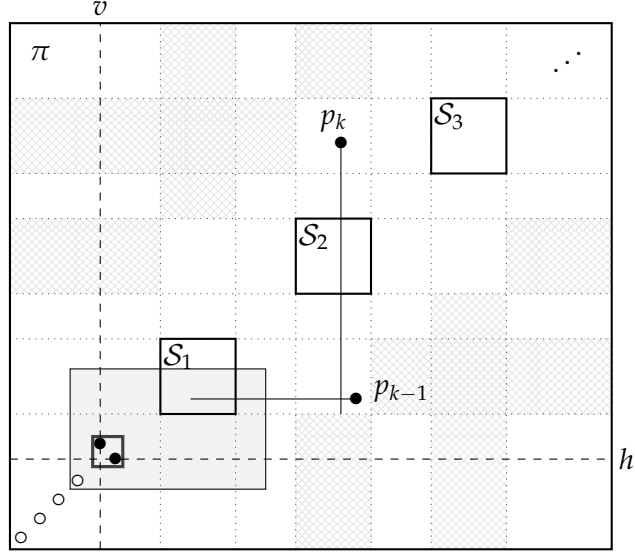
\begin{figure}
\centering
\begin{tikzpicture}[scale=0.2]
\draw[fill=black!5] (-1,-1) rectangle (12,7);
\foreach \x/\y/\z/\w in {14/-5/19/4,23/-5/28/12,19/4/35/9,28/12/35/17,
			-5/12/5/17,-5/20/14/25,5/17/10/30,14/25/19/30} { 
  \draw[pattern=crosshatch,pattern color=black!10,draw=none] (\x,\y) rectangle (\z,\w);
}
\plotpermborder{2,1}
\draw[dashed] (-5,1) -- (35,1);
\draw[dashed] (1,-5) -- (1,30);
\foreach \x/\y [count=\i] in {5/4,14/12,23/20} {
 \draw[dotted,black!70] (-5,\y) -- (35,\y);
 \draw[dotted,black!70] (\x,-5) -- (\x,30);
 \draw[dotted,black!70] (-5,\y+5) -- (35,\y+5);
 \draw[dotted,black!70] (\x+5,-5) -- (\x+5,30);
 \begin{scope}[shift={(\x,\y)}]
  \draw[thick] (0,0) rectangle (5,5);
  \node at (1.2,3.8) {$\S_\i$};
 \end{scope}
}
\foreach \x in {-0.5,-1.75,-3,-4.25} {
	\absdothollow{(\x,\x)}{}
} 
\draw[thick] (-5,-5) rectangle (35,30);
\node at (-3,28) {$\pi$};
\node at (32,28) {$\iddots$};
\node at (1,31) {$v$};
\node at (36,1) {$h$};
\foreach \x/\y/\z/\w in {18/5/7/5,17/22/17/4} {
	\absdot{(\x,\y)}{}
	\draw (\x,\y) -- (\z,\w);
}
\node at (21,5.5) {$p_{k-1}$};
\node at (16.5,23.5) {$p_k$};

\end{tikzpicture}
\caption{The general set-up in the proof of Lemma~\ref{lem-big-induction}. The crosshatched regions must be empty to avoid sliced copies of 21 that span some rectangle $\S_i$. The pin $p_k$ is shown here so as to slice $\S_3$, and the shaded rectangle denotes  $\rect(p_1,\dots,p_{k-2})$.
}
\label{fig-pins-pk-and-pl}
\end{figure}

The next five paragraphs are best read in conjunction with Figure~\ref{fig-pins-pk-and-pl}. Let $h$ denote the horizontal line crossing through the `1' of the initial 21, and $v$ the vertical line crossing through the `2'. Note that the bottom-left corner of $\pi$ below $h$ and to the left of $v$ is increasing, else we would be able to find a longer sum of 21s in $\pi$ than $\Sigma$. For convenience, we will refer to the L-shaped region below $h$ and/or to the left of $v$ as the \emph{outside} region of $\pi$, and the rest of $\pi$ will be \emph{inside}. 

Recall that a sliced copy of 21 that spans $\S_j$ is a copy of 21 that is NE or SW of $\S_j$, sliced by a point that is NW or SE of $\S_j$. If we can find a sliced copy of 21 that spans $\S_j$, then we may append it to one of the eight types of sawtooth alternation in $\S_j$. Since $\rho(\sigma_j)=3s-3$, this implies that $\rho(\pi)\geq 3s$ and we are done. Consequently, from now on we will assume that there are no sliced copies of 21 spanning any rectangle $\S_j$.

Under this assumption, since any point $p$ that slices some $\S_i$ must (by Lemma~\ref{lem-indec-slice}) slice a copy of 21, all such slicing points must be below and to the left of the top-right corner of $S_{i+1}$, and above and to the right of the bottom-left corner of $\S_{i-1}$ (when these rectangles exist). This implies that a number of regions defined by the four boundary lines of each $\S_i$ must be empty, as identified by the crosshatched areas in Figure~\ref{fig-pins-pk-and-pl}.

Now consider a shortest right-reaching pin sequence starting from the initial 21 of $\Sigma$. We will denote this pin sequence by $p_1,p_2,\dots,p_n$. For any initial segment $p_1,\dots,p_j$, let $i_j$ denote the least index (if it exists) such that $\S_{i_j}$ is contained in the NE region of $\rect(p_1,\dots,p_j)$. Observe that $i_1=i_2 = 1$. For any $j$ satisfying $2\leq j<n$, the pin $p_{j+1}$ slices the rectangular hull $\rect(p_1,\dots,p_j)$ in such a way as to slice a copy of $21$. Thus, whenever $p_{j+1}$ is a right pin or an up pin, we can assume that $p_{j+1}$ does not extend beyond $\S_{i_j}$. From this, we make two conclusions: first, that $i_{j+1} \leq i_j+1$, and second, that \emph{every} $\S_i$ must be sliced by some pin. Note that if $p_{j+1}$ is a down or left pin, then $i_{j+1}=i_j$.

In this pin sequence, we identify pin $p_k$, which is the first pin such that $\S_1\subset \rect(p_1,\dots,p_k)$. Clearly $p_k$ must be an up pin or a right pin, and we will assume that it is an up pin, the other case being analogous. We claim that $i_k \leq 4$. If not, then some $p_\ell$ (with $\ell< k$) slices $\S_{3}$. Since $p_k$ is the earliest up pin that extends at least as far as the top of $\S_1$, we conclude that $p_\ell$ must be a right pin, and can be no higher than the top of $\S_1$. However, any such pin must then contribute to a sliced copy of 21 that spans $\S_2$. Thus $i_k\leq 4$, and note that this bound is tight, as illustrated by the example placement of $p_k$ and $p_{k-1}$ in Figure~\ref{fig-pins-pk-and-pl}.  Note further that, in any case, none of the pins $p_{k+2},\dots,p_n$ can slice $\S_1$.

We now classify more precisely which $j>k$ can satisfy $i_j < i_{j+1}$. By the earlier comments, this can only happen when $p_{j+1}$ is an up or right pin, and then $i_{j+1}\leq i_j+1$. We claim that $i_j = i_{j+1}$ unless both $p_j$ and $p_{j+1}$ lie in the inside region.

First, suppose $p_{j+1}$ is a right or up pin in the outside region, then it cannot slice $\S_{i_j}$ (else $p_{j+1}$ slices a copy of 21 in $\S_{i_j}$, and this sliced copy spans $\S_1$, see Figure~\ref{fig-pj-outside}(a)), and it cannot extend beyond $\S_{i_j}$ (else we can find a copy of 21 in $\{p_1,\dots, p_j\}$ sliced by $p_{j+1}$ that spans $\S_{i_j}$, see Figure~\ref{fig-pj-outside}(b)). Thus, we conclude that $\S_{i_j}$ is NE of $\rect(p_1,\dots,p_{j+1})$, i.e.\ $i_{j+1} = i_j$. 

Next, suppose $p_{j+1}$ is a right or up pin in the inside region, but $p_j$ is outside. By definition, $p_j$ cannot slice $S_{i_j}$, from which we conclude that $p_{j+1}$ cannot be  contained in $S_{i_j}$. Furthermore, $p_{j+1}$ cannot slice $S_{i_j}$, else we may take a point in $S_{i_j}$ together with $p_j$ and $p_{j+1}$, and form a sliced copy of 21 that spans $S_1$, see Figure~\ref{fig-pj-outside}(c). This completes the claim.

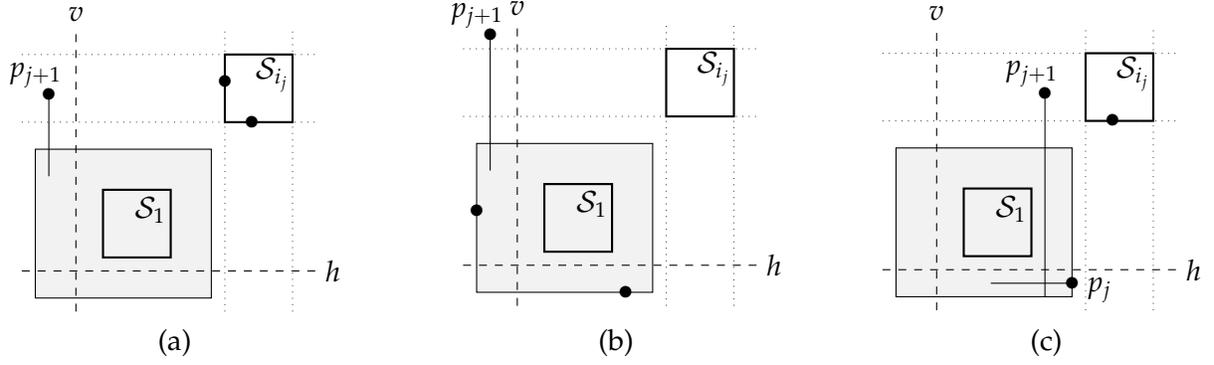
\begin{figure}
\centering
\begin{tabular}{ccccc}
\begin{tikzpicture}[scale=0.18]
\draw[fill=black!5] (-2,-1) rectangle (11,10);
\draw[dashed] (-3,1) -- (19,1);
\draw[dashed] (1,-2) -- (1,19);
 \begin{scope}[shift={(3,2)}]
  \draw[thick] (0,0) rectangle (5,5);
  \node at (3.5,3.5) {$\S_1$};
 \end{scope}
\foreach \x/\y in {12/12} {
 \draw[dotted,black!70] (-3,\y) -- (19,\y);
 \draw[dotted,black!70] (\x,-2) -- (\x,19);
 \draw[dotted,black!70] (-3,\y+5) -- (19,\y+5);
 \draw[dotted,black!70] (\x+5,-2) -- (\x+5,19);
 \begin{scope}[shift={(\x,\y)}]
  \draw[thick] (0,0) rectangle (5,5);
  \node at (3.5,3.5) {$\S_{i_j}$};
 \end{scope}
}
\node at (1,20) {$v$};
\node at (20,1) {$h$};
\foreach \x/\y/\z/\w in {-1/14/-1/8} {
	\absdot{(\x,\y)}{}
	\draw (\x,\y) -- (\z,\w);
}
\node at (-2,15.5) {$p_{j+1}$};
\absdot{(14,12)}{}
\absdot{(12,15)}{}
\end{tikzpicture}
&\rule{10pt}{0pt}&
\begin{tikzpicture}[scale=0.18]
\draw[fill=black!5] (-2,-1) rectangle (11,10);
\draw[dashed] (-3,1) -- (19,1);
\draw[dashed] (1,-2) -- (1,19);
 \begin{scope}[shift={(3,2)}]
  \draw[thick] (0,0) rectangle (5,5);
  \node at (3.5,3.5) {$\S_1$};
 \end{scope}
\foreach \x/\y in {12/12} {
 \draw[dotted,black!70] (-3,\y) -- (19,\y);
 \draw[dotted,black!70] (\x,-2) -- (\x,19);
 \draw[dotted,black!70] (-3,\y+5) -- (19,\y+5);
 \draw[dotted,black!70] (\x+5,-2) -- (\x+5,19);
 \begin{scope}[shift={(\x,\y)}]
  \draw[thick] (0,0) rectangle (5,5);
  \node at (3.5,3.5) {$\S_{i_j}$};
 \end{scope}
}
\node at (1,20) {$v$};
\node at (20,1) {$h$};
\foreach \x/\y/\z/\w in {-1/18/-1/8} {
	\absdot{(\x,\y)}{}
	\draw (\x,\y) -- (\z,\w);
}
\node at (-2,19.5) {$p_{j+1}$};
\absdot{(-2,5)}{}
\absdot{(9,-1)}{}
\end{tikzpicture}
&\rule{10pt}{0pt}&
\begin{tikzpicture}[scale=0.18]
\draw[fill=black!5] (-2,-1) rectangle (11,10);
\draw[dashed] (-3,1) -- (19,1);
\draw[dashed] (1,-2) -- (1,19);
 \begin{scope}[shift={(3,2)}]
  \draw[thick] (0,0) rectangle (5,5);
  \node at (3.5,3.5) {$\S_1$};
 \end{scope}
\foreach \x/\y in {12/12} {
 \draw[dotted,black!70] (-3,\y) -- (19,\y);
 \draw[dotted,black!70] (\x,-2) -- (\x,19);
 \draw[dotted,black!70] (-3,\y+5) -- (19,\y+5);
 \draw[dotted,black!70] (\x+5,-2) -- (\x+5,19);
 \begin{scope}[shift={(\x,\y)}]
  \draw[thick] (0,0) rectangle (5,5);
  \node at (3.5,3.5) {$\S_{i_j}$};
 \end{scope}
}
\node at (1,20) {$v$};
\node at (20,1) {$h$};
\foreach \x/\y/\z/\w in {9/14/9/-1,11/0/5/0} {
	\absdot{(\x,\y)}{}
	\draw (\x,\y) -- (\z,\w);
}
\node at (13,-0.5) {$p_{j}$};
\node at (8,15.5) {$p_{j+1}$};
\absdot{(14,12)}{}
\end{tikzpicture}\\
(a)&&(b)&&(c)
\end{tabular}
\caption{Up to symmetry, the three situations where $p_{j+1}$ is an up pin but $p_j$ and $p_{j+1}$ are not both in the inside region, all give rise to sliced copies of 21 that span $\S_1$ or $\S_{i_j}$. The shaded region denotes $\rect(p_1,\dots,p_j)$. The case where $p_{j+1}$ is a right pin is analogous.}
\label{fig-pj-outside}
\end{figure}

We now identify the least index $k'>k+1$ such that $i_{k'+1}=i_{k'}+1$, and note that $i_{k'}\leq 6$ (since $i_k\leq 4$) and none of $p_{k'},\dots,p_n$ slices $\S_1$. By the above argument, the sequence of pins $p_{k'-1},p_{k'},p_{k'+1}$ must be `up-right-up' or `right-up-right'. Now consider $p_{k'+2}$: if it is a down or left pin, then $\{p_{k'},p_{k'+1},p_{k'+2}\}$ forms a sliced copy of 21 that spans $\S_1$. Thus, $p_{k'+2}$ must also be an up or right pin, and the same argument applies to all subsequent pins. Thus, the sequence of points $p_{k'-1},p_{k'},p_{k'+1},\dots,p_n$ defines an increasing oscillation. Furthermore, since $i_{k'}\leq 6$ and each subsequent pin can slice at most one more rectangle than its immediate predecessor, since there are $m+3$ rectangles in total, this oscillation contains at least $m$ points, completing the proof.%
\end{proof}

%
%
%
%
%
%
%
%
\section{Monotone griddability for simple permutations}\label{sec-gridding}

In this section, we complete our proof of Theorem~\ref{thm:griddable}.  Starting from Theorem~\ref{thm-main}, our main concern is handling wedge sawtooth alternations since they are not simple. Our key result is the following, whose proof will take up the majority of this section.

\begin{proposition}\label{prop:wedge-sawtooth}
For every $m, p, s \in \mathbb{N}$ there exists a number $h(m, p, s)\in \mathbb{N}$ such that whenever a simple permutation contains a wedge sawtooth alternation of length $3h(m,p,s)$, then it contains a split wedge sawtooth alternation of length $3m$, a proper pin sequence with at least $p$ turns, or a spiral pin sequence with at least $s$ extensions.
\end{proposition}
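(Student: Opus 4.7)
The plan is to exploit the nested interval structure of the wedge sawtooth $W$. Write $W_i$ for the sub-wedge consisting of the leftmost $3i$ points of $W$, after fixing by symmetry an orientation in which the leftmost three points of $W$ form a 312 interval and the copies of 21 lie above the decreasing slicing sequence. Each $W_i$ is an interval of $W$: its indices are contiguous in $W$ by construction, and a short check confirms that its value set is also contiguous. Since $\pi$ is simple and $|W_i|\geq 3$, no $W_i$ can be an interval of $\pi$. Hence for each $i\in\{1,\dots,h-1\}$ there is a \emph{witness} $q_i\in\pi\setminus W$ whose $x$- or $y$-coordinate lies strictly inside the range of $W_i$; the later wedge points cannot play this role since they all sit outside $\rect(W_i)$ in both coordinates.

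The first step is to classify each $q_i$ by its position relative to $\rect(W_i)$: strictly north, south, east or west of the rectangle, with a further distinction between \emph{extreme} witnesses that lie outside the bounding hull of the whole of $W$ and non-extreme ones whose other coordinate is still inside $W$'s range, or lying in one of the interior cells of $W_i$. Applying the pigeonhole principle with $h$ sufficiently large in $m$ yields a long subsequence of indices whose witnesses share a single type. The heart of the first step is to show that each uniform \emph{extreme} case assembles, together with a well-chosen contiguous sub-wedge of $W$ of length $3m$, into a sliced wedge sawtooth alternation of length $3m$: an extreme south witness (below the entire decreasing sequence of $W$) yields a Type 1 sliced wedge, an extreme east witness yields Type 2, and an extreme north witness yields Type 3. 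Verifying that the resulting $3m$ points are order isomorphic to the required sliced wedge is a direct geometric check against the definitions in Subsection~\ref{subsec-structures}.

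The second step handles the remaining, non-extreme configurations. Here I would apply Lemma~\ref{right-reaching} to construct a right-reaching proper pin sequence beginning with the initial 21 of $W$. If the pin sequence contains at least $p$ turns we are already done, so we may assume that it has fewer than $p$ turns. Pigeonhole between turns then produces a long segment of the sequence that is turn-free, from which I would extract a genuine spiral pin sequence (restarted so that its first two pins carry no direction, and its remaining pins inherit the spiral cycle of directions from the original). I would then argue that each of the many levels of $W$ that the spiral passes close to contributes, together with a nearby non-extreme or interior witness, either a Type 1 or a Type 2 extension: the wedge's copy of 21 at that level plays the role of the extension, and the witness provides the required slicing point. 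Taking $h$ large compared to the bounds from the first step and compared to $p$ and $s$, one obtains at least $s$ distinct such extensions.

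The main obstacle I expect is verifying the fine-grained geometric conditions defining Type 1 and Type 2 extensions, in particular the requirements that the rectangles $\rect(p_i,q)$ (for Type 1) and $\rect(p_i,q,r)$ together with $\rect(p_{i-1},s)$ or $\rect(p_{i+1},s)$ (for Type 2) are sliced only by the permitted pins and points. Matching each wedge level and its local witness to a suitable pin and checking these slicing conditions will require a secondary case split by pin direction and by the exact type of witness. Because the argument involves nested pigeonhole applications (on witness types, on turn-free segments, and on level-to-pin matchings), the resulting bound $h(m,p,s)$ is expected to grow as a tower function of its arguments.
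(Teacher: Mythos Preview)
Your approach diverges from the paper's, and the two halves of your dichotomy do not fit together. The paper runs a single right-reaching pin sequence from the leftmost triple of $\omega$ and tracks, for each pin $p_i$, the quantity $\Delta(i)$ counting how many wedge triples are newly reached by $\rect(p_1,\dots,p_i)$. The whole argument hangs on this one quantity: either some pin has $\Delta(i)\geq m$, in which case \emph{that pin itself} (which necessarily slices a copy of $21$ in $\rect(p_1,\dots,p_{i-1})$) together with the next $m-1$ untouched triples of $\omega$ forms the sliced wedge (type~1, 2 or 3 according as $p_i$ is a down, right or up pin); or every $\Delta(i)<m$, which forces at least $3p(2s+1)$ indices with $\Delta(i)>0$, and after removing fewer than $p$ turns one has a long spiral segment in which each such index supplies a Type~1 or Type~2 extension built from the points of $\omega$ themselves. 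No auxiliary witness points are used, and the bound is polynomial, $h(m,p,s)=3mp(2s+1)$.

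Your witness-based dichotomy does not give you this control. In step~1, a single extreme-south witness $q$ need not produce a sliced wedge: if its $x$-coordinate lies in the gap between the last point of $T_{j-1}$ and the first point of $T_j$, then $q$ slices no copy of $21$ from $W$, and a direct check shows that no $3m$-point subset of $W\cup\{q\}$ realises the Type~1 pattern. Pigeonholing over $i$ does not rescue this, because the witnesses $q_i$ for different $i$ may all coincide (any point slicing $W_1$ also slices every later $W_i$). More seriously, in step~2 you assert that the spiral ``passes close to'' many levels of $W$, but nothing you have assumed forces this: the right-reaching pin sequence could cross the entirety of $W$ in one or two pins, so that the turn-free segment you extract sees essentially no wedge triples and yields no extensions. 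In the paper this scenario is exactly the $\Delta(i)\geq m$ case and is absorbed into the split-wedge outcome; in your argument it falls through both steps. The fix is to derive the split-wedge case and the many-extensions case from the \emph{same} pin sequence via the $\Delta(i)$ bookkeeping, rather than from an independent interval-witness argument.
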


\begin{proof}[Proof of Theorem~\ref{thm:griddable}, given Proposition~\ref{prop:wedge-sawtooth}]
First, if $\C$ contains arbitrarily long copies of any of the structures listed, then $\C$ contains arbitrarily long simple permutations that themselves contain arbitrarily long sums of 21 or skew sums of 12. Thus, by Theorem~\ref{thm-hv-griddable} the simple permutations in $\C$ cannot be monotone griddable.

Conversely, let $\Si(\C)$ denote the set of simple permutations in $\C$, and 
suppose that the permutations in $\Si(\C)$ are not monotone griddable. By Theorem~\ref{thm-hv-griddable}, the permutation class formed by taking the closure of the set $\Si(\C)$ must contain arbitrarily long sums of 21 or skew sums of 12, and hence there must be simple permutations in $\C$ that contain arbitrarily long sums of 21 or skew sums of 12. If $\C$ contains arbitrarily long sums of 21, then by Theorem~\ref{thm-main} the simple permutations of $\C$ must contain arbitrarily long sawtooth alternations or increasing oscillations. We are done unless $\Si(\C)$ contains only long wedge sawtooth alternations, but in this case we may apply Proposition~\ref{prop:wedge-sawtooth} to conclude that $\C$ contains arbitrarily long split wedge sawtooth alternations, proper pin sequences with arbitrarily many turns, or spiral proper pin sequences with arbitrarily many extensions. In any case, we conclude that we have found one of the forbidden substructures specified in Theorem~\ref{thm:griddable}.

A symmetric argument applies in the case when $\C$ contains arbitrarily long skew-sums of 12.
\end{proof}

\begin{proof}[Proof of Proposition~\ref{prop:wedge-sawtooth}] 

We will show that $h(m,p,s)=3mp(2s+1)$ suffices. Suppose we have a simple permutation $\pi$ that contains a wedge sawtooth alternation $\omega$ of size $9mp(2s+1)$. By symmetry, we can assume that $\omega$ comprises a sum of 21s which are split below, oriented $<$. From left to right, denote the triples of points of $\omega$ that form the sliced copies of 21 by $T_1, T_2, \ldots, T_{3mp(2s+1)}$. 

Now, consider a shortest right-reaching pin sequence $p_1p_2 \ldots p_n$ in $\pi$ starting at the leftmost sliced copy of 21 of $\omega$. For $i \in \{1,2, \ldots, n\}$, let $t(i)$ be the largest index $j$ such that at least one point of $T_j$ slices or is contained in $\rect(p_1,\dots,p_i)$, and let $\Delta(i)=t(i)-t(i-1)$ for $i>1$. Note that $t(1)=0$, $t(2)=1$, $t(n)=3mp(2s+1)$ and $\sum_{i=2}^{n}\Delta(i) = t(0)+t(n) = 3mp(2s+1)$. We are interested in the pins for which $\Delta(i)>0$. For any $p_i$ with $\Delta(i)>0$, note that this implies there is at least one point from each of $\Delta(i)$ triples $T_j$ that slice $\rect(p_1,\dots,p_i)\setminus \rect(p_1,\dots,p_{i-1})$.

First, if there exists $i$ such that $\Delta(i) \geq m$ then the pin $p_{i}$ slices a copy of 21 in $\rect(p_1,\dots,p_{i-1})$, which together with  $m-1$ sliced copies of 21 from $\omega$, $T_{t(i)+1},\dots,T_{t(i)+m-1}$, forms a split wedge sawtooth alternation of length $3m$, of type 1 if $p_i$ is a down pin, type 2 if $p_i$ is a right pin, and type 3 if $p_i$ is an up pin. Consequently, we can now assume that $\Delta(i) < m$ for all $i$. Letting $M=\{i: \Delta(i)>0\}$ denote the set of indices $i$ for which $\Delta(i)$ is non-zero, we have $|M|\geq 3p(2s+1)$.

Next, we are done if the pin sequence contains at least $p$ turns, thus we will assume that there are fewer than $p$ turns in total. Since $|M|\geq 3p(2s+1)$, there exists a turn-free factor of $p_1,\dots,p_n$ containing at least $3(2s+1)$ distinct indices $i$ for which $\Delta(i)>0$. Let this factor be $p_k,\dots,p_\ell$, which we will assume forms a clockwise spiral pin sequence (i.e.\ the directions follow the order up, right, down, left).

We will assume that $p_k$ is a down pin, otherwise we may remove at most three pins from the beginning of $p_k,\dots,p_\ell$ to recover a spiral sequence beginning with a down pin. The cost of doing this is that $p_k,\dots,p_\ell$ is now only guaranteed to contain at least $3(2s+1)-3 = 6s$ distinct indices $i$ with $\Delta(i)>0$. Irrespective of this, we have $k\geq 3$ (since the first two pins have no direction), which means that $p_k$ extends from a non-trivial $\rect(p_1,\dots,p_{k-1})$, and therefore $p_k$ slices a copy of 21. 

\begin{figure}
\centering
\begin{tabular}{ccccc}
\tikzstyle{background grid}=[draw, black!20,step=0.4]
\begin{tikzpicture}[scale=0.4,show background grid]
\foreach \x/\y/\z/\w in {-3/-2/-2/3,0/-2/4/-4,6/-2/7/5,4/0/6/3, -2/5/4/6,0/3/2/5,-2/-5/6/-4,-2/0/0/1} 
	\draw[draw=none, fill=black!40] (\x,\y) rectangle (\z,\w); 
\foreach \x/\y/\z/\w in {5/0/6/-2} 
	\draw[draw=none, fill=black!20] (\x,\y) rectangle (\z,\w); 
\foreach \x/\y/\z/\w in {4/0/5/-2} 
\draw[pattern=crosshatch,pattern color=black!20,draw=none] (\x,\y) rectangle (\z,\w);
\draw[very thick] (0,0) rectangle (2,1);
\draw[very thick] (0,-2) rectangle (4,3);
\foreach \x/\y/\z/\w in {1/0.5/1/3,0.5/2/4/2,3/2.5/3/-2,3.5/-1/-2/-1, -1/-1.5/-1/5,-1.5/4/6/4,5/4.5/5/-4} {
  \draw (\x,\y) -- (\z,\w);
  \absdot{(\z,\w)}{}
}
\node at(4.5,-0.5) {\small $A$};
\node at(4.5,-1.5) {\small $B$};
\node at (3,-2.8) {$p_i$};
\end{tikzpicture}
&&
\tikzstyle{background grid}=[draw, black!20,step=0.4]
\begin{tikzpicture}[scale=0.4,show background grid]
\foreach \x/\y/\z/\w in {-4/0/-2/2,-5/-2/-4/4,-2/-3/3/-2,0/-2/1/0, 5/-2/6/6,3/0/5/4, -4/6/3/7,-2/4/1/6} 
	\draw[draw=none, fill=black!40] (\x,\y) rectangle (\z,\w); 
\foreach \x/\y/\z/\w in {1/5/3/6} 
	\draw[draw=none, fill=black!20] (\x,\y) rectangle (\z,\w); 
\foreach \x/\y/\z/\w in {1/4/3/5} 
\draw[pattern=crosshatch,pattern color=black!20,draw=none] (\x,\y) rectangle (\z,\w);
\draw[very thick] (0,0) rectangle (1,2);
\draw[very thick] (-2,0) rectangle (3,4);
\foreach \x/\y/\z/\w in {0.5/1/-2/1,-1/0.5/-1/4,-1.5/3/3/3,2/3.5/2/-2,2.5/-1/-4/-1,-3/-1.5/-3/6,-3.5/5/5/5} {
  \draw (\x,\y) -- (\z,\w);
  \absdot{(\z,\w)}{}
}
\node at(1.5,4.5) {\small $A$};
\node at(2.5,4.5) {\small $B$};
\node at (3.7,3) {$p_i$};
\end{tikzpicture}
&&
\tikzstyle{background grid}=[draw, black!20,step=0.4]
\begin{tikzpicture}[scale=0.4,show background grid]
\foreach \x/\y/\z/\w in {-4/-2/-2/1,-5/-4/-4/3,0/-4/2/-2,-2/-5/4/-4, -2/3/2/5,-4/5/4/6,2/0/4/1,4/-2/6/3,6/-4/7/5} 
	\draw[draw=none, fill=black!40] (\x,\y) rectangle (\z,\w); 
\foreach \x/\y/\z/\w in {3/-2/4/0,2/4/4/5} 
	\draw[draw=none, fill=black!20] (\x,\y) rectangle (\z,\w); 
\foreach \x/\y/\z/\w in {2/2/3/3,2/-2/3/0,3/-3/4/-2,2/3/4/4} 
\draw[pattern=crosshatch,pattern color=black!20,draw=none] (\x,\y) rectangle (\z,\w);
\draw[very thick] (0,0) rectangle (2,1);
\draw[very thick] (-2,-2) rectangle (2,3);
\foreach \x/\y/\z/\w in {1/0.5/1/-2,1.5/-1/-2/-1,-1/-1.5/-1/3,-1.5/2/4/2,3/2.5/3/-4,3.5/-3/-4/-3,-3/-4.5/-3/5,-3.5/4/6/4} {
  \draw (\x,\y) -- (\z,\w);
  \absdot{(\z,\w)}{}
}
\node at(3.5,2.5) {\small $A$};
\node at(3.5,1.5) {\small $B$};
\node at(2.5,1.5) {\small $C$};
\node at(3.5,-3.5) {\small $D$};
\node at(2.5,-3.5) {\small $E$};
\node at(2.5,-2.5) {\small $F$};
\node at (-1,3.6) {$p_i$};
\end{tikzpicture}\\
(a)&&(b)&&(c)
\end{tabular}
\caption{Identifying points of $\omega$ that slice $\rect(p_1,\dots,p_i)\setminus\rect(p_1,\dots,p_{i-1})$ when (a) $p_i$ is a down pin, (b) $p_i$ is a right pin, (c) $p_i$ is an up pin. The shading is as follows: extremality of pins (dark grey), shortcuts (light grey) and regions where Type 1 extensions can occur (crosshatched).}
\label{fig:up-right-down}
\end{figure}
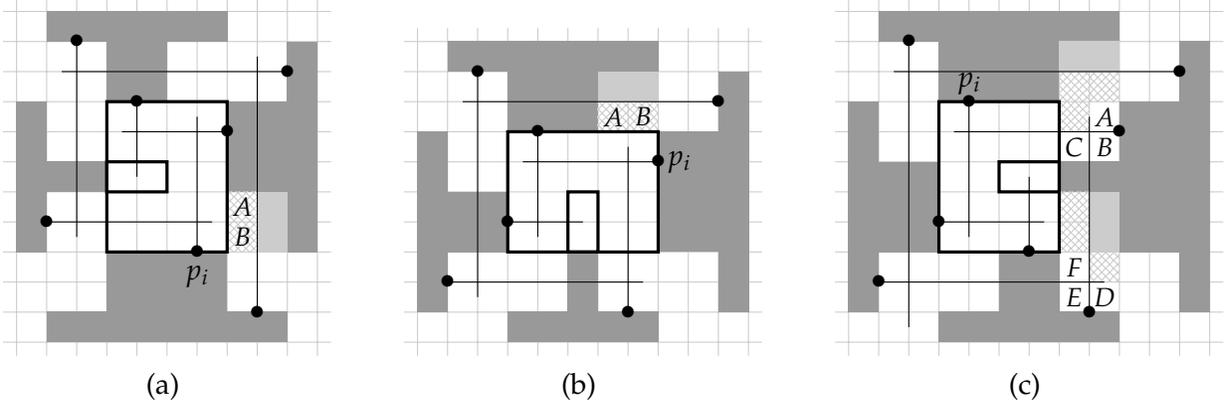

Our discussion is now accompanied by Figure~\ref{fig:up-right-down}. In these diagrams, the dark grey regions contain no points because of the maximality of pins, and the light grey regions contain no points because any such point would enable us to take a `shortcut' in the pin sequence, contradicting our choice of a shortest right-reaching pin sequence. There are also a number of crosshatched areas: these denote regions where the existence of a point will contribute a Type 1 extension.

We now consider any pin $p_i$ (with $k\leq i\leq \ell-4$) for which $\Delta(i)>0$, with a view to identifying a Type 1 or Type 2 extension in each case. Note that if $p_i$ is a left pin, then $\Delta(i)=0$ since $\rect(p_1,\dots,p_{i-1})$ already slices or contains the leftmost sliced copy of 21 in $\omega$. Thus the cases that remain are where $p_i$ is an up, right or down pin. 

If $p_i$ is a down pin, then there must be at least one point of $\omega$ in a crosshatched region (regions $A$ and $B$ in Figure~\ref{fig:up-right-down}(a)), allowing a Type 1 extension to $p_{i}$. Similarly, if $p_i$ is a right pin, then we again conclude that we can find a Type 1 extension to $p_i$, 
since there must exist a point of $\omega$ in regions $A$ or $B$ of Figure~\ref{fig:up-right-down}(b). 

This leaves the case where $p_i$ is an up pin, illustrated in Figure~\ref{fig:up-right-down}(c). If any one of the crosshatched regions contains a point, then we can find a Type 1 extension for one of $p_{i-3}$, $p_{i+1}$ or $p_{i+2}$, so we now assume that these are empty.  Fix some triple $T_j$ which contains a point that slices $\rect(p_1,\dots,p_i)\setminus \rect(p_1,\dots,p_{i-1})$. The `1' of this triple must either coincide with $p_{i+1}$, or lie in one of the regions $A$, $B$ or $C$. This implies the same of the `2', and thence the slicing point must equal $p_{i+2}$, or lie in one of the regions $D$, $E$ or $F$. 

If both the `2' and the `1' of $T_j$ lie in $C$ and the slicing point lies in $F$, then we have a Type 2 extension of $p_{i+1}$ and we are done. There are two other cases to consider: either the slicing point is in region $F$ and the `1' is in region $B$, or the slicing point is below the pin $p_{i+3}$ (i.e.\ it equals $p_{i+2}$ or lies in regions $D$ or $E$). In either case, we have that the `2' and the `1' are sliced by a point that lies below $p_{i+3}$. We can now substitute the pin $p_{i+1}$ with the `1', and the pin $p_{i+2}$ with the slicing point of $T_j$, and then the 2 is a Type 1 extension of the `1' (acting as a right pin).\footnote{In making these pin substitutions, we observe that the resulting sequence violates the extremality condition for proper pin sequences, but it still follows the pattern of a spiral pin sequence, with extensions. Furthermore, we find that $\rect(p_1,\dots, p_{i+6})$ is unchanged, so all future pins will be unaffected by this.}

We have now shown how to find an extension whenever we have a pin $p_i$ with $\Delta(i)>0$. There are at least $6s$ of these pins, but the above analysis does not guarantee that the $6s$ extensions are applied to distinct pins, and extremality may have been (temporarily) violated. 

To resolve these issues, define a \emph{spiral} to be a set of four contiguous pins that begins with a down pin. We observe that the above analysis shows us that for any pin $p_i$ with $\Delta(i)>0$, the pin(s) which can be extended (or substituted and extended) all lie in the same spiral as $p_i$, or the spiral immediately after $p_i$. Thus, by restricting our collection of pins $p_i$ with $\Delta(i)>0$ in $p_k,\dots,p_{\ell}$ to a subset for which any pair is separated by a complete spiral of pins, we can ensure that the extensions are applied to distinct pins. Note also that this corrects any issues arising from the violation of extremality. 

In order to do this, recall that every left pin $p_i$ satisfies $\Delta(i)=0$. Thus, we may choose every sixth pin from the collection of $6s$ pins with $\Delta(i)>0$, leaving us with a set of at least $s$ pins separated by at least seven points. This, in turn, gives us a set of $s$ distinct pins which have extensions of types 1 or 2, and thus we have formed an extended spiral with $s$ extensions.

Finally, while we cannot appeal to symmetry to cover the case where the pin sequence spirals in the opposite direction, the arguments are similar and so we omit the details.
\end{proof}
%
%
%
%
%
%
%
%
\section{Concluding remarks}\label{sec-conclusion}

\paragraph{Decision procedure} In this paper we have characterised the classes whose simple permutations are monotone griddable. From this, it should be possible to describe a decision procedure to answer the follow algorithmic problem:
\begin{question}Given a finitely based permutation class $\C$, is it decidable whether the simple permutations in $\C$ are monotone griddable?\end{question}

The crux of such an algorithm would likely be to extend existing algorithms  that handle pin sequences (such as those given in~\cite{bassino:a-polynomial-al:b} and~\cite{brignall:simple-permutat:b}) to identify turns and (for spiral pin sequences) extensions.

\paragraph{Geometric griddability} For a class $\C$ whose simple permutations are all \emph{geometrically} griddable, the class itself is contained in the substitution closure $\langle\Si(\C)\rangle$, whence we can conclude that $\C$ has a number of important properties: it is finitely based, well-quasi-ordered,  and is enumerated by an algebraic generating function (see Theorems~4.4 and~6.1 of~\cite{albert:inflations-of-g:}). For this reason, a geometric analogue to Theorem~\ref{thm:griddable} is highly desirable.

\begin{question}
Does there exist a characterisation of classes whose simple permutations are \emph{geometrically} griddable?
\end{question}

This would appear to be a difficult question. In particular, there is no known analogue of Theorem~\ref{thm-hv-griddable} to characterise when a class is itself geometrically griddable.

\bibliographystyle{acm}
\bibliography{refs}

\end{document}